\setlist[enumerate]{leftmargin=.5in}
\setlist[itemize]{leftmargin=.5in}
\newcommand{\vectornorm}[1]{\left|\left|#1\right|\right|}
\newcommand{\R}{\mathds{R}}
\newcommand{\Ms}{\mathcal{M}}
\newcommand{\Ls}{\mathcal{L}}
\newcommand{\Os}{\mathcal{O}}
\DeclareMathOperator{\diag}{diag}
\DeclareMathOperator{\Div}{div}
\numberwithin{theorem}{section}
\title{A Geometrical Method for Low-Dimensional Representations of Simulations\thanks{
\funding{The authors were supported within the projects SIMDATA-NL and VAVID funded by the BMBF, the German Federal Ministry of Education and Research. }}}
\author{Rodrigo Iza-Teran%
\thanks{Fraunhofer Center for Machine Learning and Fraunhofer SCAI, Sankt Augustin, Germany}%
\and
Jochen Garcke%
\footnotemark[2]
\thanks{%Fraunhofer Center for Machine Learning and Fraunhofer SCAI, Sankt Augustin, Germany and 
	Institut für Numerische Simulation, Universität Bonn, Germany}
}
\newtheorem{conjecture}[theorem]{Conjecture}
\newcommand{\kibitz}[2]{\ifnum\Comments=1\textcolor{#1}{#2}\fi}
\begin{document}
\maketitle
\begin{abstract}
We propose a new data analysis approach for the efficient post-processing of bundles of finite element data from numerical simulations. 
The approach is based on the mathematical principles of symmetry. 

We consider the case where simulations of an industrial product are contained in the space of surface meshes embedded in $\R^3$. 
Furthermore, we assume that distance preserving transformations exist, albeit unknown, which map simulation to simulation. 
In this setting, a discrete Laplace-Beltrami operator can be constructed on the mesh, which is invariant to isometric transformations and therefore valid for all simulations. 
The eigenfunctions of such an operator are used as a common basis for all (isometric) simulations. 
One can use the projection coefficients instead of the full simulations for further analysis. 
To extend the idea of invariance, we employ a discrete Fokker-Planck operator, that in the continuous limit converges to an operator invariant to a nonlinear transformation, and use its eigendecomposition accordingly. 

The data analysis approach is applied to time-dependent datasets from numerical car crash simulations. 
One observes that only a few spectral coefficients are necessary to describe the data variability, and low dimensional structures are obtained. 
The eigenvectors are seen to recover different independent variation modes such as translation, rotation, or global and local deformations. 
An effective analysis of the data from bundles of numerical simulations is made possible, in particular an analysis for many simulations in time.
\end{abstract}

\section{Introduction}

% context 
In computer-aided engineering (CAE), one models the physical behavior of industrial products by partial differential equations (PDEs), which are then solved numerically.
Nowadays, the art of simulation is highly developed in several industries, where high performance computer systems are used to solve finite element models for many design variants. 
Therefore, engineers need to analyze and compare a large number of simulations in the product development process. Their goal is to get the best product performance, while taking functional constraints, costs, or regulations into account.

To achieve this, a large amount of engineering know-how and time is invested for the evaluation of model variants based on corresponding numerical simulations.
It involves detailed investigation of time-dependent 3D visualizations of the simulated product, e.g. in the form of movements or deformations. 
But analyzing and trying to identify cause and effect using the full finite element mesh---in the range of millions of nodes---is a challenging task.
Due to the difficulty of comparing very fine discretization meshes, usually only a few derived scalar quantities, such as energy absorption or deformations in selected points of the structure, or performance curves, for example vibration response curves, are used for the evaluation of the simulation results. 
This is under the assumption that the relevant information from all simulations can thereby be concentrated.
Furthermore, the input variables together with scalar output quantities allow the construction of response surfaces~\cite{Myers}. 
These approaches fail, if the overall behavior cannot be represented by such scalar quantities or curves. 
In this case, they cannot be used to study complex simulation results in more detail. 

More generally, the overall objective is to evaluate the sensitivity of the simulation results in relation to the input parameters. 
In other domains a range of research on such sensitivity analysis with functional inputs and outputs exists. For example, \cite{Marrel2016} gives an overview of approaches
for spatiotemporal data, with examples for flood risk assessment and radionuclide atmospheric dispersion, and states that further research in this field is strongly needed.
In our targeted engineering domain, only limited research exists which considers the complete numerical results. One of the first works analyzing engineering data from full numerical simulations was \cite{Ackermann}, in which the authors used principal component analysis (PCA) for detecting important parameters from nonlinear finite element simulations. Car crash simulations were analyzed in \cite{Mei2008} using clustering with a local distance measure, where regions of si\-mi\-lar deformation behavior were identified and evaluated. PCA has also been used in \cite{Thole} for a group of simulations where some parameters have been changed. Their approach uses the first eigenvectors of the covariance matrix of all given simulations as deformation modes. 

While linear methods such as PCA have proven to be successful for industrial applications, it is nevertheless known that if nonlinear correlations are present in the data, the use of PCA is not optimal~\cite{LeeV}. 
In \cite{a2013}, several nonlinear methods from machine learning have been used for the analysis of crash simulations. 
Good reconstruction capabilities using a nonlinear principal manifold approach were shown, as well as the detection of the principal effects and their dependencies on input variables by using diffusion maps. Additionally, \cite{Iza2013} shows the application of diffusion maps to engineering data from vibration analysis and metal forming. 
The resulting clustering of vibration response curves or deformation data from simulation bundles show that the most important input variable changes can be identified in low dimensional embeddings.

A related domain is 3D geometry analysis, where the studied object is a surface mesh embedded in $\R^3$. A number of methods have been developed for pose independent shape classification \cite{Rongjie2010}, shape retrieval \cite{Reuter2006}, shape segmentation \cite{reuter2009}, invariant mesh representation \cite{ Lipman}, and compression \cite{Karni2000,Benchen}, to name a few. 
Many of these approaches make use of the Laplace-Beltrami operator, whose eigenvectors are used for pose independent shape recognition and shape retrieval. 
Note that this is exactly the opposite of what we aim for, we would like to distinguish between two isometrically deformed shapes.
In \cite{Ovsjanikov2012}, an approach related to ours, 3D shapes are compared assuming the existence of an a-priori known transformation bringing one shape into another. 
That approach is substantially different from ours, since it describes a shape using features which are then used for classification or pose independent shape matching. 

In this work, we introduce a novel analysis approach, which can efficiently compare finite element simulations given as surface mesh data in 3D. The motivation and idea for our method are inspired by the notion of symmetry. 
Assuming simulations are obtained by specific transformations, we construct operators that are invariant to these transformations. 
From this operator, an orthogonal basis is calculated using its eigendecomposition. We project all simulations onto this basis. 
Thereby, a new representation of the transformed geometries is obtained, with as many coefficients as number of nodes. 
But, only a few spectral coefficients have large values. 
Concentrating on these coefficients, they can be associated with independent effects, for example rotations, translations or types of deformation.
Furthermore, by using different distance measures, which are assumed to be preserved by a transformation, other operators and corresponding eigenfunctions can be obtained. 
We describe in particular a discrete Laplace-Beltrami operator and a discrete Fokker-Planck operator. 
In summary, the high dimensional simulation data can be transformed into a compact representation by using eigenvectors of such operators, enabling an efficient dimensionality reduction, while simplifying further data analysis. 

The underlying ideas for using a spectral decomposition of an invariant operator for data analysis are presented in section~\ref{sec:principles}. 
In section~\ref{sec:operators}, we consider as examples the Laplace-Beltrami operator and a specific Fokker-Planck operator, as well as their discrete versions. 
In particular, we provide first theoretical justifications to the observed decay of the spectral coefficients as well as the obtained separation of independent components. 
An empirical investigation of the method using industrial engineering data from car crash simulations is given in section~\ref{sec:applications}.

% \subsection{Notation}
% Ausblenden in der fertigen Version:
% \begin{itemize}
%  \item $m$ - Anzahl der Simulationen
%  \item $i$ - Laufindex Simulationen
%  \item $r$ - Referenzsimulation, $\Ms^r, K^r, x^r, f^r$
%  \item $j$ - Laufindex spektrale Darstellung
%  \item $N$ - Anzahl der Gitterpunkte
%  \item $k$ - Laufindex Gitterpunkte
%  \item $\psi$ - Basisfunktion, Eigenfunktion
%  \item $\varphi$ - Transformation
% \end{itemize}
\section{Principles of the Approach}
\label{sec:principles}
Our approach is inspired by the notion of symmetry as used for the analytical solution of differential equations.
Here, we have simulations as numerical solutions of partial differential equations.
When a transformation is applied to one simulation, the result is indistinguishable from the original one in the underlying mathematical space.
In other words, it is invariant to the transformation and one could instead work directly in a suitable invariant space.
This notion of symmetry can be exploited in the analysis of simulation results.
However, finding such a transformation for a specific realistic situation as modeled by a PDE is clearly infeasible. % and is inherently problem dependent. 

As an example, we consider simulations of car crashes and the arising deformations of the car parts, where a part is modeled as a (thick) surface embedded in $\R^3$.
Since the underlying transformation is unknown, we propose to employ a differential operator which observes suitable
invariance properties. % such as isometries. %that is valid for such isometric changes, in particular one which is observed by a differential operator. 
In many practical cases, the variations between different simulations, e.g. due to changes in the material parameters, are distance preserving. 
Here, even after a deformation, the distance between points inside the geometric structure stays the same. 
The Laplace-Beltrami operator is such an invariant operator. It is the same for two surfaces, if one is obtained from the other by an isometric transformation.
Since this operator is positive semidefinite, its spectral orthogonal decomposition can be obtained and an orthogonal basis of an invariant space can be constructed. 
One can now project mesh functions, in particular those from the simulation results, onto the new basis by employing the spectral coefficients.

As we project mesh functions representing isometric deformations onto the invariant basis, a natural decomposition along sets of equivalence classes is obtained. 
In the end, one just has to consider the spectral coefficients along the different components of the orthogonal decomposition. 
They represent so-called orbits, which trace the space of all isometric deformations.
Note that the deformations in a car crash are a complex mixture of different effects such as translations, rotations, different bending behaviors, or torsion. 
How these interact, in particular in relation to the changes in the model design and material parameters, is of great importance for the engineering analysis. 

As an example consider the case of translations and rotations of an object, which is discretised as a 3D mesh. 
The $x$, $y$ and $z$  coordinates of the object are changed, but the distances along the surface are preserved. 
A Laplace-Beltrami operator can be constructed using the 3D coordinates of one of the objects, which will be---approximately in the discrete case---the same for all other objects.
The eigenvectors of this operator can be used as an orthogonal basis. 
Projecting mesh functions $f_x, f_y,$ and $f_z$, representing the $x$, $y$, and $z$ coordinates of the object, onto this basis, one observes that there are two eigenvectors where the projection coefficients vary the most. 
One component represents the offset, while the second component represents the rotation, that is, one obtains independent subspaces for rotations and translations.
To illustrate this, we can take the coefficients from the component that corresponds to rotations of the object. 
Projecting the three mesh functions $f_x, f_y,$ and $f_z$ onto this eigenvector, we obtain three sets of coefficients. 
Plotting these in a 3D plot results in a sampling of the sphere, see figure~\ref{fig:principles_of_approach}.
\begin{figure}
	\centering
		\includegraphics[width=0.3\columnwidth]{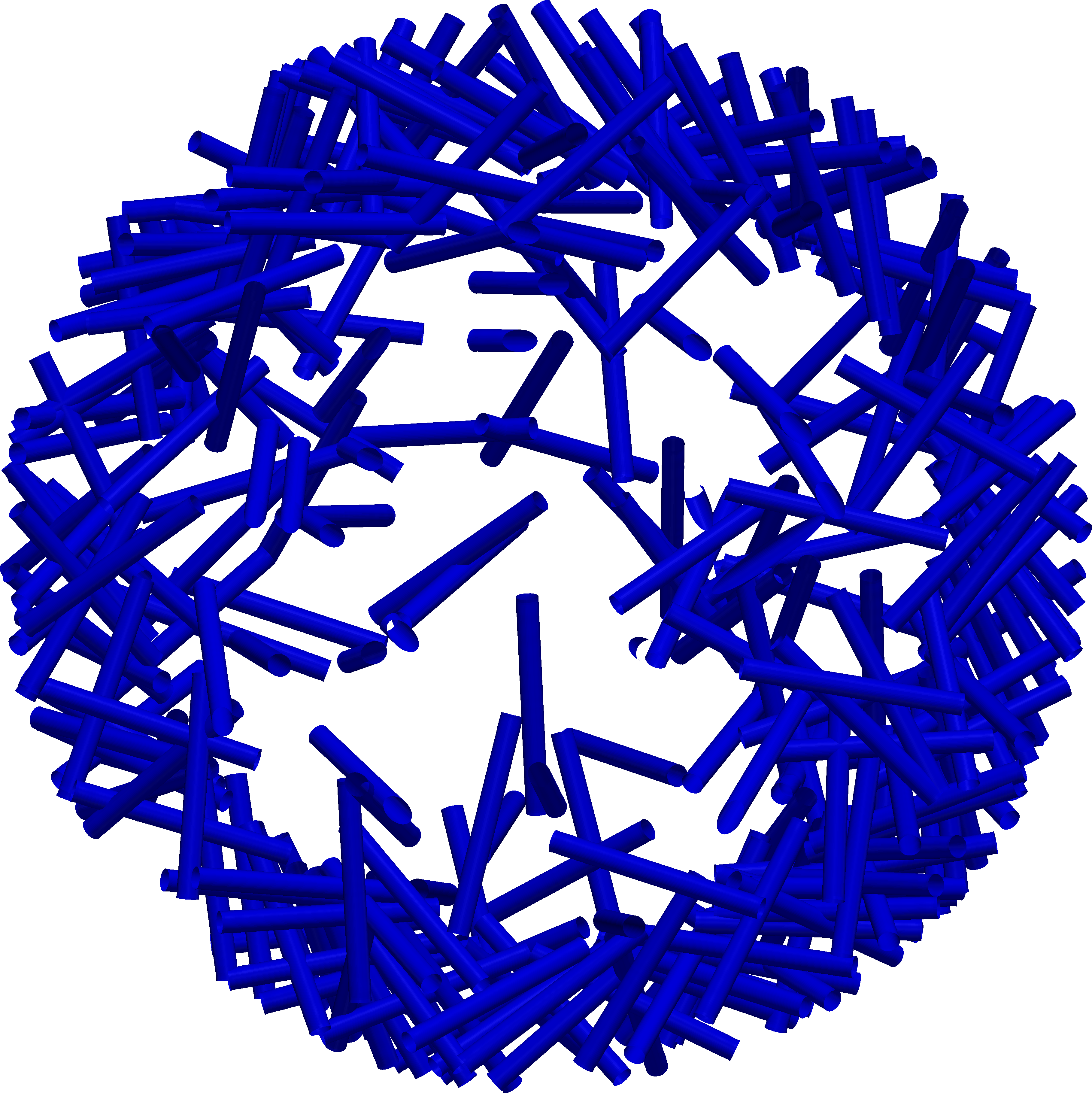} 
\hspace{1cm}
		\includegraphics[height=0.3\columnwidth,width=0.3\columnwidth]{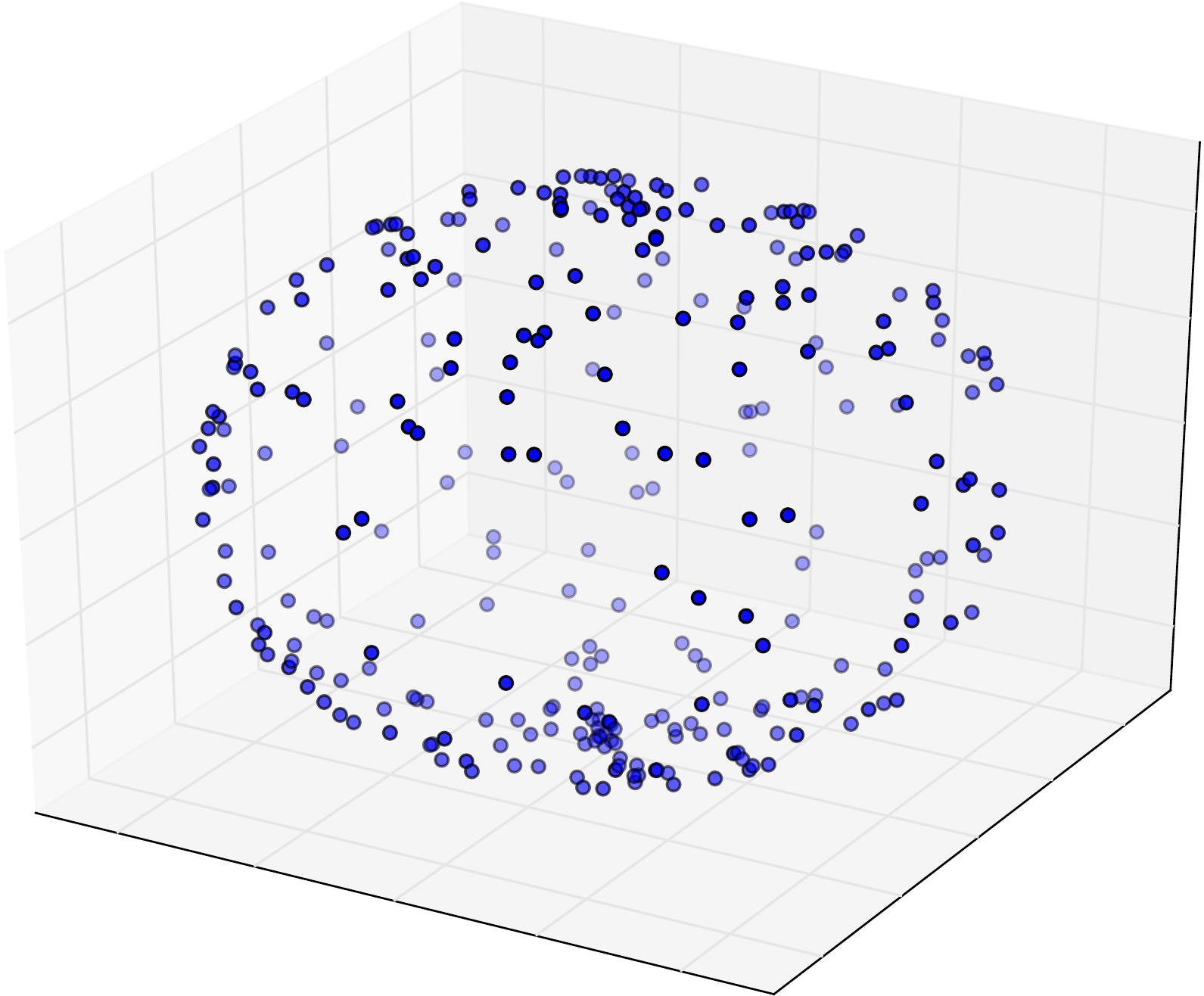}
	\caption{A cylinder is randomly rotated around its axis and translated---for illustrative purposes---with respect to a central point (left).
	The mesh functions for the $x$, $y$, and $z$ coordinates of each rotated and translated cylinder are projected onto the second eigenvector of the Laplace-Beltrami operator. Plotting the resulting three spectral coefficients results in a low dimensional representation of the rotations of the transformed cylinder (right).
	\label{fig:principles_of_approach}}
\end{figure}

For realistic simulation data, we will observe empirically in section~\ref{sec:applications} that the magnitude and the variance of the coefficients decay very fast. 
Furthermore, we can identify specific physical effects, which are represented by components of the spectral decomposition, e.g. independent isometric variations such as translations, rotations, or deformations.
This process can be  thought of as a decomposition into independent 
``physical'' effects. 

We now formalize these considerations.

\subsection{Solution Space of Simulations}
We want to study the space of simulations, that is, the space of time-dependent (numerical) solutions of a partial differential equation on a given fixed domain $\Omega$, where variations are obtained by changing boundary and/or initial conditions. 
In this work, we assume these are modeled as surfaces embedded in $\R^3$. 

First, let us recall definitions from differential geometry. For details refer to, e.g., \cite{Michor}. 
A diffeomorphism is a transformation which is invertible and smooth. 
Let $\varphi: \Ms \rightarrow \Ms'$ be a diffeomorphism between two Riemannian manifolds $(\Ms,g)$ and $(\Ms',g')$, with $g, g'$ the respective metric. 
$\varphi$ is an isometry if it is (geodesic) distance preserving:  $d_{g'}(\varphi(p), \varphi(q)) = d_g(p,q)$.

To formulate the underlying abstract setting we make the following two assumptions:
\begin{itemize}
	\item Each solution at a given time $t$ is a (Riemannian) 2-manifold $\Ms^i(t)$ embedded in $\R^3$.
	\item Solutions can be related by group transformations, which are assumed to be isometries. This means that at each time step $t$, a solution surface $\Ms^i(t)$ is isometric to an initial, or reference, manifold $\Ms$ via a transformation $\Phi^i(t, \cdot)$, i.e.
$
\Ms^i(t) = \Phi^i(t, \Ms).
$
\end{itemize}
We view a numerical simulation result as a surface mesh in time, which is a discrete approximation of a 2-manifold $\Ms^i(t)$. 
For ease of presentation, we drop the dependence on $t$ in the notation from now on. We denote by $\varphi^i: \Ms \rightarrow \Ms^i$ the point wise mapping, which is a bijective isometry under these assumptions.

Now, consider the space of all embeddings of a surface $\Ms$ in $\R^3$, denoted by $Emb(\Ms,\R^3)$.
Ideally, one could work in a smaller space inspired by the framework of shape spaces as given in \cite{Michor2006,Bauer,Bauer2014},
where the shape space is a quotient space of a manifold modulo a specific group action $G$ over it. 
For simulations we would use the space of orbits
\[
Emb(\Ms,\R^3)/G(\Ms),
\]
where $G$ is a group of transformations that leave a surface invariant with respect to distances as measured in the reference surface $\Ms$.
Which particular structure a specific simulation space has will depend on the transformation group $G$. Rotations, translations, and deformations are examples of transformation groups that arise in our application.

\subsection{Discrete Orthogonal Decomposition}
To obtain a new representation for numerical simulation data, we now consider a positive semidefinite operator. 
Therefore, all eigenvalues $\lambda_j$, $ j  \geq 0 $ are real, positive. We furthermore assume they are isolated with finite multiplicity.
For example, the Laplace-Beltrami operator $\Delta_\Ms := - \Div_g \cdot \nabla_g $, from the eigenvalue problem $\Delta_\Ms \psi = \lambda \psi$ restricted to the manifold $(\Ms,g)$, fulfills this assumption~\cite{rosenberg_1997}.
% ,  where $\lambda$ is an eigenvalue of $\Delta_{\Ms}$ and  $\psi$ is the corresponding eigenfunction. 
Note the relation between the Laplace operator and Fourier analysis, which is an underlying theme in the following.

The set of normalized eigenfunctions $\left\{ \psi_j \right\}_{j=1}^\infty$ of such an operator forms an orthonormal basis for functions on $\Ms$.
Therefore, for any function $f \in C^{2}(\Ms)$, by projecting the function along the infinite dimensional eigenspace spanned by the eigenfunctions, we can obtain the following decomposition:
\begin{equation} \label{decompos}
f = \sum_{j=0}^{\infty} \alpha_j \psi_j \mbox{, }\quad \alpha_j = \langle f, \psi_j \rangle_g.
\end{equation}
Its vector of spectral coefficients $\alpha = [ \alpha_1, \alpha_2, ...]$, can be used to compute the distance of functions due to the Parseval identity as follows:
\begin{proposition}
	\label{Parseval}
	The difference between two functions $f^1, f^2 \in C^{2}(\Ms) $, %with
	$f^i = \sum_{j=0}^{\infty} \alpha_j^i \psi_j$, $\alpha_j^i = \langle f^i, \psi_j \rangle_g $, $\alpha^i = [\alpha^i_1, \alpha^i_2, \ldots ]$, $i=1,2$,
	represented by a decomposition with the normalized eigenfunctions $\left\{ \psi_j \right\}_{j=1}^\infty$ for a positive semidefinite operator, is given by  
	\[
	\| f^1 - f^2 \|_g^2 = \sum_{i,j=0}^{\infty} \langle (\alpha_i^1 -  \alpha_i^2)  \psi_i, (\alpha_j^1 -  \alpha_j^2) \psi_j\rangle_g^2 =  \| \alpha^1 - \alpha^2 \|^2. 
	\]
\end{proposition}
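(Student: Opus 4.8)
The plan is to recognize the statement as the classical Parseval identity in the Hilbert space $L^2(\Ms,g)$, applied to the difference $h := f^1 - f^2$, and to reduce everything to the two facts already available: that $\left\{\psi_j\right\}_{j\geq 1}$ is an orthonormal basis of $L^2(\Ms,g)$ (stated just before the proposition) and that the coefficient map $f \mapsto (\langle f,\psi_j\rangle_g)_j$ is linear. Since $f^1, f^2 \in C^{2}(\Ms) \subset L^2(\Ms,g)$, so is $h$, and all manipulations take place inside this Hilbert space.

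First I would set $\beta_j := \alpha_j^1 - \alpha_j^2$ and observe, by linearity of $\langle\cdot,\cdot\rangle_g$ in its first slot, that $\langle h,\psi_j\rangle_g = \langle f^1,\psi_j\rangle_g - \langle f^2,\psi_j\rangle_g = \beta_j$; that is, $\beta$ is precisely the coefficient vector of $h$. Because $\left\{\psi_j\right\}$ is a complete orthonormal system, $h = \sum_j \beta_j\psi_j$ with convergence in the $L^2(\Ms,g)$-norm. Next I would compute $\|h\|_g^2 = \langle h,h\rangle_g$ by inserting this expansion into both arguments, which after pulling the summations through the (continuous, bilinear) inner product yields $\sum_{i,j} \langle \beta_i\psi_i, \beta_j\psi_j\rangle_g$ --- this is exactly the intermediate double sum in the statement. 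Finally, orthonormality $\langle \psi_i,\psi_j\rangle_g = \delta_{ij}$ kills every off-diagonal term and leaves $\sum_j \beta_j^2 = \|\beta\|^2 = \|\alpha^1-\alpha^2\|^2$, which is the claim.

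The only step requiring genuine care --- and hence the main obstacle, though it is a standard one --- is the interchange of the infinite summation with the inner product. I would handle it by working with the partial sums $h_N := \sum_{j=1}^N \beta_j\psi_j$: completeness of $\left\{\psi_j\right\}$ gives $h_N \to h$ in $L^2(\Ms,g)$, joint continuity of the inner product then gives $\|h\|_g^2 = \lim_{N\to\infty}\|h_N\|_g^2$, and for each finite $N$ orthonormality yields $\|h_N\|_g^2 = \sum_{j=1}^N \beta_j^2$; letting $N\to\infty$ closes the argument and simultaneously shows $\sum_j \beta_j^2 < \infty$. One should also note that the orthonormal-basis property itself relies on the spectral hypotheses imposed earlier (isolated eigenvalues of finite multiplicity, e.g.\ the Laplace--Beltrami operator on a compact manifold); once that is granted, the remaining bookkeeping is routine and carries over verbatim to the discrete, finite-dimensional setting where the convergence issues disappear entirely.
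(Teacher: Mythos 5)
Your argument is correct and is exactly the standard Parseval-identity computation that the paper itself invokes without proof (the proposition is stated with only the phrase ``due to the Parseval identity'' and no \verb|proof| environment follows it), so there is nothing to compare against beyond that appeal. One small point: the intermediate double sum you derive is $\sum_{i,j}\langle \beta_i\psi_i,\beta_j\psi_j\rangle_g$ \emph{without} the exponent $2$ on the inner product, which is what actually equals $\|\beta\|^2$; the squared inner product appearing in the paper's displayed formula is evidently a typo (it would give $\sum_j \beta_j^4$ on the diagonal), and your derivation implicitly corrects it.
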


\subsubsection{Discrete Setting}
We view the discretization of a solution of a PDE as a mesh $K$ approximating a $2$-manifold $\Ms$, which is isometrically embedded in $\R^3$. The following definition is one way to quantify such an approximation. See \cite{Belkin2008} for details.
\begin{definition}
Let $K$ be a meshed surface approximating $\Ms$, where the vertices of $K$ are on $\Ms$. 
We say that $K$ is an $(\epsilon, \eta)$-approximation of $\Ms$ if the following two conditions are fulfilled: 
\begin{itemize}
\item For a face $t$ in $K$, the maximum distance between any two points on $t$ is at most $\epsilon \rho$. Here $\rho$ is the \emph{reach}, defined as the infimum of the local distance between any point $w$ in $\Ms$ and the medial axis of the surface $\Ms$.  
\item For a face $t$ in $K$ and a vertex $p \in t$, the angle between $n_t$, the unit outward normal of the plane passing through $t$, and $n_p$, the unit outward normal of $\Ms$ at $p$, is at most $\eta$.
\end{itemize}
\end{definition}
The conditions above state that one needs to have a fine enough mesh to approximate the shape of the surface as well as the curvature.

Furthermore, we define restrictions of continuous functions on $\Ms$ to the mesh $K$:
\begin{definition}
Let $f:\Ms  \rightarrow \R$ be a continuous function on $\Ms$. 
The function $f$ evaluated at the nodes of a mesh $K$ is called a mesh function $f|_K:K \rightarrow \R$.
\end{definition}
\noindent In the following $f$ will denote both the continuous function on $\Ms$ and its restriction to $K$. 

For a discrete operator on a mesh $K$, the eigenvectors ${\psi_j}$ can be calculated as approximations of the eigenfunctions of the corresponding analytical operator. 
Note that we will later in particular consider discrete approximations of the Laplace-Beltrami operator and of a Fokker-Planck operator.
As before, we can project any mesh function $f$ along the eigenvectors and obtain a set of spectral coefficients $\alpha^i_j, j=1,\ldots,N_h$,
where $N_h$ corresponds to the number of nodes on the mesh $K$, which can be very large. 

\begin{corollary}
	\label{proposition_main1}
	Let an orthogonal basis $\{\psi_j\}_{j=1}^{N_h}$ consisting of $N_h$ eigenvectors $\psi_j$ %, $j=1,\ldots,N$ 
	be given, which is obtained from the spectral decomposition of a discrete approximation of a given po\-si\-ti\-ve semidefinite operator on the mesh $K$.
	Then all mesh functions $f$ can be represented as 
	\begin{equation} 
	\label{eq:decomp_prop_main}
	f = \sum_{j=1}^{N_h} \alpha_j \psi_j \mbox{, }\quad \alpha_j = \langle f,{\psi_j}\rangle,
	\end{equation}
	% 	$$f^i= \sum_{j=1}^N \alpha_j^i \psi_j,$$ 
	where $ \alpha_j $ are the spectral coefficients obtained by the projection of $f$ into the eigenvector basis.%  $\{\psi_j\}_{j=1}^N$.  
\end{corollary}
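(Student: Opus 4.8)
The plan is to recognize this statement as the finite-dimensional spectral theorem in disguise, together with the elementary fact that coordinates in an orthonormal basis are given by inner products. First I would identify the space of mesh functions $f|_K : K \to \R$ with the coordinate space $\R^{N_h}$, where $N_h$ is the number of nodes of $K$, by listing the nodal values. Under this identification, a discrete approximation of a positive semidefinite operator is represented by a matrix $L$ that is self-adjoint and positive semidefinite with respect to the inner product $\langle \cdot, \cdot \rangle$ employed on mesh functions (the standard Euclidean one, or a mass-matrix weighted variant, depending on the chosen discretization scheme — the argument below is identical in either case). This is consistent with the continuous setting of \eqref{decompos} and with the hypothesis recorded before Proposition~\ref{Parseval}.

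Next I would invoke the spectral theorem for self-adjoint operators on a finite-dimensional inner product space: $L$ admits an orthonormal basis of eigenvectors $\{\psi_j\}_{j=1}^{N_h}$ of $\R^{N_h}$, with real, nonnegative eigenvalues $\lambda_j$. This is precisely the object assumed given in the corollary. The decomposition then follows by the standard orthonormal-expansion argument: since $\{\psi_j\}_{j=1}^{N_h}$ spans $\R^{N_h}$, every mesh function $f$ can be written as $f = \sum_{j=1}^{N_h} \beta_j \psi_j$ for suitable scalars $\beta_j$; taking the inner product of both sides with $\psi_k$ and using orthonormality $\langle \psi_j, \psi_k \rangle = \delta_{jk}$ gives $\beta_k = \langle f, \psi_k \rangle$, which we rename $\alpha_k$. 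This establishes \eqref{eq:decomp_prop_main}. One may additionally note, as a direct consequence, that the Parseval identity of Proposition~\ref{Parseval} continues to hold with the finite sum over $j=1,\dots,N_h$, so distances between mesh functions reduce to Euclidean distances between their coefficient vectors $\alpha$.

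There is essentially no genuine obstacle here: the statement is elementary linear algebra once the set-up is fixed. The only point that requires a little care is the bookkeeping about the inner product, namely making sure that ``positive semidefinite operator'' is read as self-adjointness and semidefiniteness with respect to the very same $\langle\cdot,\cdot\rangle$ that is used to define the coefficients $\alpha_j$; otherwise the eigenvectors need not be orthonormal in that pairing and the clean formula $\alpha_j=\langle f,\psi_j\rangle$ would have to be replaced by a dual-basis expression. With that convention in place, the corollary is the finite-dimensional truncation of the decomposition already discussed in the continuous case, and no further estimates or approximation-theoretic arguments are needed for the statement as phrased.
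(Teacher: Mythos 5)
Your argument is correct and coincides with what the paper implicitly relies on: the corollary is stated without proof as the finite-dimensional analogue of \eqref{decompos}, resting exactly on the spectral theorem and orthonormal expansion you spell out. Your added caveat about self-adjointness with respect to the same inner product used for the coefficients is well taken (indeed relevant for the row-stochastic Fokker--Planck discretization, whose eigenvectors are only orthogonal after the $D_d^{1/2}$ conjugation), but it does not change the substance.
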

In our application setting, the functions of interest $f: K \rightarrow \R^3$ describe the $x,y,$ and $z$ components of a mesh point at a time step of a numerical solutions of a partial differential equation using the finite element method. Each component function $f_{\{x,y,z\}}$ is represented by (\ref{eq:decomp_prop_main}). Additionally, one is also interested in scalar functions on the surface, such as stress or speed. For a simulation bundle these discrete mesh functions are the high-dimensional data, of dimension $N_h$, which we would like to analyze, e.g. to find clusters or classify results.

\subsubsection{An Operator under Isometric Transformations}
Now, we bring the simulation space and the spectral decomposition of a positive semidefinite operator together by con\-si\-de\-ring isometric transformations.
Let $\varphi^i$ be a distance preserving diffeomorphism $\varphi^i: \Ms \rightarrow \Ms^{i}$ between two Riemannian manifolds $(\Ms,g)$ and $(\Ms^{i},g^i)$. 
Let $\bar{K} =  \left\{ K^i \right\}_{i=1}^{m}$ be a set of meshes which are assumed to have the same connectivity and approximate a set of $\varphi^i$-transformed surfaces $\Ms^{i}$, $i=1,\ldots, m$.
Then $\Ms, \Ms^i$, and the $\Ms^i$ pairwise, are isometric. Assume that $\varphi^i_{|K^i}$, the restriction of $\varphi^i$ to the discrete mesh, where vertices are mapped to vertices, preserves distances as well. In this ideal case the following diagram commutes:
\begin{center}
	\begin{tikzpicture}
	\matrix (m) [matrix of math nodes,row sep=3em,column sep=4em,minimum width=2em]
	{
		\Ms & \Ms^i \\
		K & K^i \\};
	\path[-stealth]
	(m-1-1) edge node [left] {} (m-2-1)
	(m-1-1.2) edge[<->] node [below] {$\varphi^i$} (m-1-2.182)
	(m-2-1.east|-m-2-2) edge[<->] node [below] {}
	node [above] {$\varphi_{| K}^i$} (m-2-2)
	(m-1-2) edge node [right] {} (m-2-2);
	\end{tikzpicture}
\end{center}
As said, the underlying transformations are unknown. Therefore we employ an operator which observes suitable invariance properties. 
It has the same spectral decomposition for the different manifolds $\Ms^i$ and results in an invariant basis for the mesh functions. 
Notice that in a discrete setting it is a-priori not clear how to handle geodesic distance preservation. 
In general, an error is made by their approximate computation and one assumes that it is kept small, which we examine later in section \ref{sec:approx_geo}.

\subsection{Data Analysis Method}
\label{sec:method}
Based on the explained principles we propose a data analysis method for data from numerical simulations, which allows a dimensionality reduction and a separation of effects along components. First results for a theoretical justification of the approach are provided in section \ref{sec:operators}. Numerical observations for realistic cases in automotive product design analyzing data from crash simulations are provided in section \ref{sec:applications}.

\subsubsection{Method Fundamentals}
Given $m$ simulations on a mesh with $N_h$ nodes, we first compute a spectral decomposition of a discrete approximation of an invariant operator. 
In section \ref{sec:operators} of this work, we present two alternatives, the Laplace-Beltrami operator and a Fokker-Planck operator, and their discretizations.
 
In a second step we select $p$ eigenvectors for further analysis. 
For example, we consider the decay of the spectral coefficients of the operator or look at the variance of the spectral coefficients $\alpha^i$ of mesh functions $f^i$ of particular interest. 
Under certain conditions, the coefficients have a strong decay, and therefore the ``essential'' part of a dataset is concentrated on a few coefficients. 
As will be seen in the applications, the first few components can correspond to the dominant behavior, and later ones become less relevant. 
At least for the Laplace-Beltrami operator, the first components can be geometrically associated with high frequency parts or details~\cite{reuter2009}. 

As a consequence, we can analyze the main variability in the simulation data using only the first few spectral coordinates.
Up to dimension three, the projection coefficients can be used directly as embedding coordinates for visualization. 
If more coefficients are required to capture the data, a further method of dimensionality reduction can be used to obtain an embedding~\cite{LeeV}. 
The use of a single spectral basis for all simulations also naturally enables the construction of low dimensional representations for time-dependent problems. 
Thereby a projection of mesh functions for all time steps onto this eigenbasis is possible.

This data analysis procedure has the following properties:
\begin{itemize}
	\item an extraction of useful low dimensional information in the spectral coefficients of largest magnitude and variance,
	\item a natural simultaneous treatment of many time steps,
	\item a separation of independent effects in simulation data, %consider as functions over a mesh domain,
    \item a parameterization of all simulation data through the use of the orthogonal basis, which allows a reconstruction in the original high dimensional simulation space,
	\item as a consequence, if the input parameters are correlated with the obtained low dimensional representation, simulations for new input parameter combinations can be approximated, within the originally employed ranges. 
\end{itemize}
Some of these properties will be demonstrated in section \ref{sec:applications} on realistic data.

Under the assumption that we are able to express all solutions of our differential equations as the action of some transformation from a reference configuration, we can think of the proposed method as one that approximates this group action, with the flexibility of being able to define the invariance property as needed.  
As mentioned, we are particularly interested in the mesh functions $f_x$, $f_y$, and $f_z$. Alternatively, the difference of the mesh positions to some other time step, which then gives the deformation, or other mesh functions can be of relevance.

Note here, that our approach is related to methods for nonlinear dimensionality reduction~\cite{LeeV}, in particular diffusion maps~\cite{Coifman} and similar ideas used in diffusion wavelets \cite{Coifman.Maggioni:2004}. 
In \cite{Belkin2004} a decomposition of functions over a manifold using eigenfunctions of the Laplace-Beltrami operator was used in a semi-supervised machine learning context.
These methods work on the manifold of data points, which in our case are the mesh functions describing a simulation. 
In contrast, we consider manifolds stemming from numerical simulations as data points, where the manifolds are subject to an unknown transformation. In some sense, the simulation space can be considered as a manifold of manifolds. As described, we carry out the actual data analysis on mesh functions by taking this additional structure into account.

Observe that, in the other methods,
one calculates a suitable distance between all data points to perform the actual dimensionality reduction. 
By the proposed approach, a distance calculation between the data points is now available through the spectral projection coefficients. 
The resulting distance can then replace the Euclidean distance in the other methods for further processing.
Furthermore, our method allows an easy reconstruction in the high-dimensional space using the orthogonal eigenvectors. 
To our knowledge, this important property is only shared by PCA, on which our empirical comparisons therefore focus.

\subsubsection{Comparison with Principal Component Analysis}
PCA is a very efficient way of representing the variability of a dataset $X = \{X^1, X^2, \allowbreak \ldots, X^m \}$, $X^i \in \R^n $. 
It is obtained from an eigenvalue decomposition given by $[U S] = \text{EVD}(X_c X_c^T) $, where $X_c$ 
is a centered representation of $X$, $U$ collects the eigenvectors, and $S$ is a diagonal matrix whose diagonal elements are the eigenvalues.
If the eigenvalues are ordered in descending order, then the first eigenvector corresponds to the first main variation of the dataset, the second to the second one, and so on. Often only a few coefficients are necessary to reconstruct each data so that $X^i \approx    \sum_{k=1}^p  \langle U_k,X^i\rangle U_k$ with $ p < m \ll n$. 
Note that underlying the PCA is the preservation of Euclidean distances in the high dimensional space~\cite{LeeV}. 
The embedding can also be computed by the singular value decomposition of $X_c$, so that the computational effort scales mainly with the minimum of $m$ and $n$.

In the case of numerical simulations in a finite element space, each simulation can be considered as a point in $\R^{n}, n= 3\cdot N_h$.
In view of our application, with PCA, both the basis, that is the principal components, and the projection coefficients depend on this high-dimensional data from the mesh functions.
Comparing the basis from PCA and an invariant operator, e.g. the Laplace-Beltrami operator, one observes significant differences. 
PCA concentrates the data in at most $m$ coefficients, where the variability of the data plays the central role, but neither geometry nor smoothness of the data objects is considered. 
While the first components of the PCA can encode much information, this is under the assumption that there is a linear transformation between high and low-dimensional space.  
If there is some nonlinear dependence on parameters, then other methods than the PCA are needed. 
In this case the data resides on a nonlinear manifold.

\section{Employed Operators}
\label{sec:operators}

Taking a closer look at the proposed method, the important step is the construction of the discrete invariant differential operators, which have a special behavior under isometric transformations.
For example, pose invariance of a deformation can be achieved using an approximation of the Laplace-Beltrami operator constructed using geodesic distances. It has been observed that the employed discrete Laplace-Beltrami operator has a discrete spectrum and that the eigenvectors build a geometric decomposition of ``frequencies''~\cite{reuter2009}. 

Furthermore, other operators that are invariant to specific transformations can be constructed based on suitable distance measures on the mesh.
In particular, we introduce a Fokker-Planck operator, where one can approximately recover a problem-dependent metric, and with it, define an operator that is invariant to an underlying nonlinear transformation. 
Under suitable conditions, a fast decay of the spectral coefficients exists. 
Thereby, also in this case, most of the ``energy'' of the data is concentrated in the first few components.
Other invariant operators can be used as well, obtained by different distance measures for example.

\subsection{The Laplace-Beltrami Operator on a Mesh}
\label{sec:Lap_mesh}
As before, let $\Ms$ be a surface. 
Let $K$ be an $(\epsilon, \eta)$-approximation of $\Ms$. 
Further, let $t$ be a face in $K$, let $\#t$ denote the number of vertices on $t$, and let $V(t)$ be the set of vertices of $t$. Then, for any vertex $w \in V(t)$ of any face $t \in K$, following \cite{Belkin2008} a mesh Laplace operator can be defined as
\begin{equation}
	\label{LB_approx}
	L_{K}^h f(w) = \frac{1}{4 \pi h^2} \sum_{t \in K} \frac{Area(t)}{\#t} \sum_{p \in V(t) } e^{-\frac{d(p , w)^2}{4 h}} (f(p) - f(w)).
\end{equation}
Here, $h$ is a parameter which corresponds to the size of the local neighborhood at a point and $d(p,w)$ denotes the graph distance, that is, the length of the shortest path between $p$ and $w$ along the graph, using Euclidean distances between the vertices as graph weights.

The following theorem exploits the fact that the Euclidean distance is locally a good approximation of the geodesic distance. 
One obtains a result for the approximation of the Laplace-Beltrami operator $\Delta_{\Ms}$ on a surface by the mesh Laplace operator $L_{K}^h$ on a corresponding mesh.
\begin{theorem}
	\label{LB_approx2}
	\emph{(Laplace-Beltrami Approximation \cite{Belkin2008})}
	\label{LaplaceBeltrami}
	Let $K_{\epsilon,\eta}$ be a an $(\epsilon, \eta)$-approx\-imation of $\Ms$. Put $h(\epsilon,\eta)= \epsilon^{\frac{1}{2.5 + \alpha}}+\eta^{\frac{1}{1+\alpha}}$ for an arbitrary positive number $\alpha > 0$. Then for any function $f \in C^2(\Ms)$ it holds that
	\[
	\lim_{\epsilon,\eta \rightarrow 0} \sup_{K_{\epsilon,\eta}}  \vectornorm{ L_{K_{\epsilon,\eta}}^{h(\epsilon,\eta)} f - \Delta_{\Ms} f|_{K_{\epsilon,\eta}} }_{\infty} = 0,
	\]
	where the supremum is taken over all $(\epsilon, \eta)$-approximations of $\Ms$.
\end{theorem}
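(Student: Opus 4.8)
The plan is to compare $L^h_K f$ with $\Delta_{\Ms}f$ through the intermediate continuous averaging operator
\[
A^h f(x) \;:=\; \frac{1}{4\pi h^2}\int_{\Ms} e^{-\frac{\|x-y\|^2}{4h}}\bigl(f(y)-f(x)\bigr)\,dA(y),
\]
where $\|\cdot\|$ is the ambient Euclidean norm and $dA$ the area element of $(\Ms,g)$, and to estimate $\vectornorm{A^h f - \Delta_{\Ms}f|_{K}}_\infty$ and $\vectornorm{L^h_K f - A^h f}_\infty$ separately, adding them by the triangle inequality. For the first --- the functional-approximation step --- one works in geodesic normal coordinates centred at $x$, Taylor-expands $f$ and the metric to second order, and uses that a surface in $\R^3$ with positive reach $\rho$ satisfies $\|x-y\|^2 = d_g(x,y)^2 + O\!\bigl(d_g(x,y)^4/\rho^2\bigr)$, so that chord and geodesic distance agree to high order. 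The Gaussian confines the integral to a geodesic ball of radius $\sim\sqrt{h\log(1/h)}$, the odd moments vanish by symmetry, the quadratic moment reproduces $\Delta_{\Ms}f$ thanks to the extra $1/h$ carried by the $1/(4\pi h^2)$ prefactor, and the remainder is an explicit power of $h$ (a genuine $O(\sqrt h)$ when $D^2 f$ is H\"older, and a rate governed by the modulus of continuity of $D^2 f$ for $f\in C^2$, which is enough since only a limit is asserted and $\Ms$ is compact). As $h(\epsilon,\eta)\to 0$ when $\epsilon,\eta\to 0$, this term vanishes.

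The heart of the matter is the second estimate, in which $L^h_K f$ is read as a quadrature rule for $A^h f$. Since the nearest-point projection $\pi$ onto $\Ms$ is well defined on the tubular neighbourhood of radius $<\rho$ and is bi-Lipschitz with constants near $1$, and since the $(\epsilon,\eta)$-approximation puts $K$ inside that tube, one can write $\Ms = \bigcup_{t\in K}\pi(t)$ and split $A^h f(x)$ into a sum of integrals over the curved patches $\pi(t)$. On a single face $t$ one then controls three errors: (i) replacing $\int_{\pi(t)}$ by $\mathrm{Area}(t)$ times the vertex average $\tfrac{1}{\#t}\sum_{p\in V(t)} e^{-\|p-x\|^2/4h}\bigl(f(p)-f(x)\bigr)$ costs the face diameter $\epsilon\rho$ times the Lipschitz constant of $y\mapsto e^{-\|x-y\|^2/4h}\bigl(f(y)-f(x)\bigr)$, which is $O(h^{-1/2})$ since differentiating the Gaussian pulls out a factor $\|x-y\|/h$; (ii) the mismatch between the flat face $t$ and the curved patch $\pi(t)$ --- in area and in the displacement $\|y-\pi(y)\|$ --- is controlled by the normal deviation $\eta$ via the second $(\epsilon,\eta)$-condition; (iii) replacing the Euclidean exponent $\|p-x\|^2$ by the graph-distance exponent $d(p,x)^2$ requires that along an $(\epsilon,\eta)$-approximating mesh the shortest graph path from $p$ to $x$ matches the geodesic, hence the chord, to a higher-order relative error, so that $\bigl|d(p,x)^2-\|p-x\|^2\bigr|$ is small --- and because this sits inside $e^{-(\cdot)/4h}$, it enters as the multiplicative factor $e^{O(|d^2-\|\cdot\|^2|/h)}$, which forces the discrepancy to be $o(h)$.

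Summing over faces, only the $\lesssim h\log(1/h)/(\epsilon\rho)^2$ faces meeting the effective support of the Gaussian contribute materially --- the remaining tail is negligible against the $h^{-2}$ prefactor because $f-f(x)$ is bounded on $\Ms$ --- and combining with the per-face bounds gives $\vectornorm{L^h_K f - A^h f}_\infty \le C_{\Ms}\,\|f\|_{C^2(\Ms)}\, g(\epsilon,\eta)$, where in $g$ every occurrence of $\epsilon$ is divided by a fixed power of $h$ that is at most $2.5$ and every occurrence of $\eta$ by a fixed power of $h$ that is at most $1$. Under the stated choice one has $\epsilon \le h^{2.5+\alpha}$ and $\eta \le h^{1+\alpha}$, so each term of $g$ tends to $0$ (here $\alpha>0$ supplies the strict margin); this is exactly why the exponents $2.5+\alpha$ and $1+\alpha$ appear. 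All constants depend only on $\Ms$ --- through $\rho$ and bounds on curvature and area --- and on $\|f\|_{C^2(\Ms)}$, hence are uniform over the $(\epsilon,\eta)$-approximations, which disposes of $\sup_{K_{\epsilon,\eta}}$. I expect step (iii) to be the main obstacle: since the distance enters nonlinearly through the Gaussian, an $O(h)$ error in the squared distance is not affordable, so one must establish that graph distance approximates Euclidean (and geodesic) distance on a coarse, reach-controlled mesh to this sharper order and then track how that interacts with the $h^{-2}$ normalisation --- and it is precisely this balancing that fixes the admissible relation between $h$, $\epsilon$, and $\eta$.
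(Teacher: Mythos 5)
The paper does not prove this theorem itself---it is imported from \cite{Belkin2008}, with the single modification (Euclidean distance replaced by graph distance in the kernel) justified only by a one-line remark and a pointer to \cite{DIzaTeran}. Your reconstruction follows exactly the strategy of the cited proof: a triangle inequality through the continuous Gaussian averaging operator, a normal-coordinate Taylor expansion (using $\|x-y\|^2=d_g(x,y)^2+O(d_g^4/\rho^2)$) for the functional-approximation half, and per-face quadrature estimates in $\epsilon$ and $\eta$ balanced against the $h^{-2}$ normalisation for the discretisation half, which is indeed where the exponents $2.5+\alpha$ and $1+\alpha$ come from. Your step (iii) correctly isolates the only point at which the statement here actually differs from \cite{Belkin2008}---the graph-distance kernel, where one needs the squared-distance discrepancy to be $o(h)$ uniformly on the effective support of the Gaussian---and that is precisely the part the paper also defers to the references rather than proving, so your sketch is at least as complete as the paper's own treatment.
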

Theorem \ref{LB_approx2} was derived in \cite{Belkin2008} for unbounded manifolds. However, the result holds for interior points of a surface with boundary. 
Moreover, the Euclidean distance is used in \cite{Belkin2008} in the definition of $L_{K}^h$ instead of $d(p,w)$. 
For an $(\epsilon, \eta)$-approximation of $\Ms$, the graph distance is a better approximation of the geodesic distance than the Euclidean distance. 
Therefore, the theorem naturally holds for definition (\ref{LB_approx}) as well~\cite{DIzaTeran}.
As a consequence, for a mesh fine enough that also approximates the curvature of $\Ms$ well, we expect to get an approximation of the corresponding continuous Laplace-Beltrami operator on this surface. 

These considerations justify that the simulation bundle can be jointly projected onto the eigenbasis obtained from one discrete operator.
We make use of expression (\ref{LB_approx}), evaluated on a reference mesh. 
The graph distance, as an approximation of the geodesic distance, is calculated using the algorithm described in \cite{Mitchell}. To approximate the Laplace-Beltrami operator~(\ref{LB_approx}), we use a slightly modified version of the software\footnote{J. Sun, MeshLP: Approximating Laplace-Beltrami Operator from Meshes, \url{geomtop.org/software/meshlp}}
used in \cite{Belkin2008}. That algorithm requires a parameter $ \rho \cdot \sqrt{h}$ which controls the maximum geodesic distance of nodes which are employed in the calculation of a matrix entry. The overall procedure is described in algorithm~\ref{alg:LaplaceBeltrami}.

\begin{algorithm}[t]
	\DontPrintSemicolon
	\SetKwComment{tcmy}{$\triangleright$ }{}
	\KwIn{simulation $x^r$: a triangular mesh in $\R^3$ with $N_h$ points and $N_f$ faces} 
	\Parameter{$\rho$, $h$, $p$}
	\KwOut{first $p$ eigenvectors of the discrete Laplace-Beltrami operator}%, invariant to distance preserving transformations}
	
	\ForEach(\tcmy*[f]{estimate areas of each face}){$\tilde k \in \{1,\ldots,N_f\}$}{
		\ForEach(\tcmy*[f]{$1/3$ of area assigned to each vertex}){$i \in \{1,2,3\}$}{
			$area[\tilde k_i]$ = (area of face $\tilde k$)/3\tcmy*[r]{face $\tilde k$ vertices indexed by $\tilde k_i$}
		}	
	}
	\ForEach(\tcmy*[f]{weight matrix with graph distances}){$k \in \{1,\ldots,N_h\}$}{
		$[ids,dists]$ = graphdist($k, x^r, \rho \cdot \sqrt{h}$)\tcmy*[r]{distances on $x^r$ up to $\rho \cdot \sqrt{h}$}
		\ForEach{$l,d \in [ids,dists]$}{
			% 			$vid = dist[j].first$ \; 
			$\mathbf{W}[k,l] = area[k] \cdot area[l] \cdot \exp(-d^2/(4h))  / (4 \pi h^2)$\;
			
		}
	}    
	$\mathbf{L}  =  \mathbf{W} - \mathbf{D} $, where $\mathbf{D} = diag(\mathbf{W} \cdot \mathds{1})$ \tcmy*[r]{compute graph Laplacian}
	decompose $ \mathbf{L}$  by $[\mathbf{U},\mathbf{E}] = \text{EVD}(\mathbf{L}) $\tcmy*[r]{compute eigendecomposition}
	\Return first $p$ non-trivial eigenvectors $\mathbf{U} $\;   
	\caption{Spectral decomposition of the discrete Laplace-Beltrami operator}
	\label{alg:LaplaceBeltrami}
\end{algorithm}

\bigskip

\label{sec:approx_geo}
We now study the effect of approximating the geodesic distance using the graph distance on the mesh.
Here, we assume that a point on a mesh is sampled from the surface. 
The discrete nature of the mesh results in an approximation to the geodesic distance on the surface. 
For an $(\epsilon, \eta)$-approximation of $\Ms$, the difference between geodesic and graph distance can be made arbitrarily small with decreasing $\epsilon$ and $\eta$~\cite{DIzaTeran}.

A bijective transformation $\varphi: \Ms \mapsto \Ms'$ between manifolds $\Ms, \Ms'$, is said to be an $\varepsilon$-isometry if
\begin{equation*}
	\sup_{p, w \in \Ms} | d_{\Ms}(p,w) - d_{\Ms'}(\varphi(p), \varphi(w)) | \leq \varepsilon.
\end{equation*}

We now assume that the discrete transformations $\varphi_{| K}^i$ between (discrete) surfaces are $\varepsilon$-isometric. 
Under the assumption that the variations in the distances due to the approximation have a Gaussian distribution, 
we can state the following result on the effect of errors in the distance computations on the corresponding discrete Laplace-Beltrami operator. 
\begin{proposition}
	\label{propositionLB}
	1) Let a surface $\Ms$ be given and let a geodesic distance preserving mapping $\varphi^i$ transform $\Ms$ into a series of deformed surfaces $\Ms^i$, $i=1,\ldots, m$. The Laplace-Beltrami operator constructed using the geodesic distance in $\Ms$ is the same as the one for each $\Ms^i$.
	
	2) Let the set of meshes $\bar{K}=  \left\{ K^i \right\}_{i=1}^{m}$ contain the approximations of the surfaces $\Ms^i$. Let there be transformations $\varphi_{|K}^i, i=1,\ldots,m$ between the meshes which are $\varepsilon$-isometric, where the $\varepsilon$ will depend on the two respective meshes involved.
	Further, assume that the variance for the $\varepsilon$-isometric transformations follow a Gaussian distribution. 
	Then, the approximation of the Laplace-Beltrami operator $L_{K}^h$ given in  (\ref{LB_approx}), constructed using graph distances for one mesh $K$, differs only by a scaling factor from the ones constructed using one of the deformed meshes $i=1,\ldots,m$ in the set $\bar{K}$.
\end{proposition}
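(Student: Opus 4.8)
The plan is to handle the two parts separately: part~1 is the statement that the Laplace--Beltrami operator is an intrinsic object, and part~2 is a perturbation estimate on the discrete kernel of~(\ref{LB_approx}).

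For part~1 I would use that preservation of geodesic distance is equivalent to preservation of the Riemannian metric, $(\varphi^i)^{*} g^i = g$: the length of a curve, hence the geodesic distance, depends only on $g$, and conversely $g$ is recovered from the distance function by a second-order (polarization) expansion. Since the volume form, gradient and divergence entering $\Delta_{\Ms}$ are built from $g$ alone, $\varphi^i$ conjugates the operators, $\Delta_{\Ms^i}\big(f\circ(\varphi^i)^{-1}\big) = \big(\Delta_{\Ms} f\big)\circ(\varphi^i)^{-1}$, so the spectra agree and eigenfunctions correspond under the pullback, which is the claim; see also~\cite{rosenberg_1997}. The same bookkeeping applies to the discrete operator~(\ref{LB_approx}) in the ideal case of the commuting diagram: the face areas $Area(t)$, the cardinalities $\#t$ and the graph distances $d(p,w)$ are all fixed by $\varphi^i_{|K}$, so $L_{K^i}^{h}$ is the conjugate of $L_{K}^{h}$ by the vertex permutation induced by $\varphi^i_{|K}$ and has the same eigenvalues.

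For part~2 I would write $L_{K}^{h}f(w) = \sum_{p} W_K[w,p]\,(f(p)-f(w))$ with $W_K[w,p] = (4\pi h^2)^{-1} A_{wp}\, e^{-d(p,w)^2/(4h)}$, where $A_{wp}$ collects the (lumped) area factors, and model the deformed mesh by $d_i(p,w) = d(p,w) + \delta_{pw}$, with $\delta_{pw}$ the defect of the $\varepsilon$-isometry $\varphi^i_{|K}$, $|\delta_{pw}|\le\varepsilon$, taken by hypothesis to be a centered Gaussian with variance $\sigma^2 = \sigma^2(\varepsilon)\to 0$ as $\varepsilon\to0$; the area factors $A_{wp}$ are preserved up to a global constant, which I absorb into the final scalar. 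Taking the expectation over the Gaussian defects and carrying out the resulting one-dimensional Gaussian integral gives
\[
\mathbb{E}\big[\,e^{-d_i(p,w)^2/(4h)}\,\big] \;=\; c(h,\sigma)\, e^{-d(p,w)^2/(4\tilde h)}, \qquad \tilde h := h + \tfrac{\sigma^2}{2},
\]
with $c(h,\sigma) = (h/\tilde h)^{1/2}$ \emph{independent} of the pair $(p,w)$. Hence $\mathbb{E}[W_{K^i}] = \kappa\,\widetilde W_K$, where $\widetilde W_K$ is the weight matrix of the reference mesh built with the slightly enlarged bandwidth $\tilde h$ and $\kappa = \kappa(h,\sigma)\to 1$ (with $\kappa = (\tilde h/h)^{3/2}$ if the $(4\pi h^2)^{-1}$ normalization is kept fixed); since the degree is a row sum of the weights, this scaling passes through to $\mathbb{E}[L_{K^i}^{h}] = \kappa\,\widetilde L_K$. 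Finally I would invoke Theorem~\ref{LB_approx2}: as $\epsilon,\eta\to0$ one has $\varepsilon\to0$, hence $\tilde h\to h$ and $\kappa\to1$, so $L_{K}^{h}$ and $\widetilde L_{K}$ share the limit $\Delta_{\Ms}$ and the bandwidth change is asymptotically irrelevant --- for a fixed mesh the two operators therefore differ only by the scalar $\kappa$, as asserted.

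The main obstacle is to close the gap between the exact-sounding statement and what the computation delivers, namely an identity \emph{in expectation} together with a benign bandwidth shift $h\mapsto h+\sigma^2/2$. Making this rigorous requires: (i) a concentration argument showing that the random matrix $W_{K^i}$ is close to its mean --- plausible since each entry is a bounded function of a single defect and the operator acts through sums over many vertices, but one needs control on the joint distribution of the $\delta_{pw}$; (ii) justification of the area-preservation claim, since an $\varepsilon$-isometry of surfaces need not preserve $2$-dimensional area unless $\varepsilon$ is small relative to the reach $\rho$ and the curvature scale, so one is implicitly in that regime; and (iii) the observation that centering of the Gaussian is essential --- a nonzero mean $\mu$ would replace $d$ by $d+\mu$ in the exponent, i.e. reweight near and far neighbors differently, which is not a scaling. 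I would present the Gaussian-integral identity as the core of the argument and state (i)--(iii) explicitly as the hypotheses under which ``differs only by a scaling factor'' is to be read.
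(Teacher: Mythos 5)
Your proposal is correct in outline and follows the same overall strategy as the paper, but where the paper's (very short) proof sketch invokes a key external result as a black box, you replace it with an explicit elementary computation, so a comparison is in order. The paper disposes of part~1 exactly as you do (the operator is intrinsic, hence invariant under isometries; the paper simply calls this ``obvious''), and for part~2 it cites a random-matrix result of El~Karoui \cite{Karoui2010}: for a Gaussian kernel, a kernel matrix built from noise-perturbed data is asymptotically a \emph{rescaled} version of the clean one, with the full argument deferred to \cite{DIzaTeran}. The mechanism there is that the noise shifts the \emph{squared} distances by an (approximately constant) amount $c$, and $e^{-(d^2+c)/(4h)}=e^{-c/(4h)}e^{-d^2/(4h)}$ factors out as an exact global scalar. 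Your direct route instead perturbs the distance itself, $d\mapsto d+\delta_{pw}$ with centered Gaussian $\delta_{pw}$, and averages; the one-dimensional Gaussian integral you carry out is correct and yields $\mathbb{E}\bigl[e^{-(d+\delta)^2/(4h)}\bigr]=(h/\tilde h)^{1/2}e^{-d^2/(4\tilde h)}$ with $\tilde h=h+\sigma^2/2$, i.e.\ a global scalar \emph{plus} a bandwidth dilation. This is a genuinely informative refinement: it makes visible that under this noise model the proposition's ``differs only by a scaling factor'' holds only modulo a perturbation of the kernel width, which you then dispose of asymptotically via Theorem~\ref{LB_approx2}; the paper's sketch hides this behind the citation. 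Your explicitly listed caveats --- concentration of $W_{K^i}$ around its expectation (the joint law of the $\delta_{pw}$ is not specified by the hypotheses), preservation of the area weights in (\ref{LB_approx}), and the necessity of centering --- are real gaps that the paper's sketch does not address either; they are presumably what the deferred full proof in \cite{DIzaTeran} and the concentration machinery of \cite{Karoui2010} are meant to handle. In short: the paper buys brevity and a rigorous random-matrix backbone by citation; your version buys transparency about where the ``scaling factor'' comes from and an honest accounting of the residual bandwidth shift, at the cost of leaving the concentration step as an explicit assumption.
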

\begin{proof}[Proof sketch]
	The first statement is obvious. Since the geodesic distance stays the same after an isometric transformation, calculating the Laplace-Beltrami operator based on it will lead to the same result.
	
	For the second statement, we are in the discrete case, so the geodesic distance is approximated by the graph distance, where we assume a small error perturbation of magnitude $\epsilon$ which follows a Gaussian distribution. 
	Now, we can use a result from \cite{Karoui2010}, which states that for the Gaussian kernel, as we use here, such a Laplacian matrix disturbed by noise can be considered a rescaled version of the original one.
	For the full proof, see \cite{DIzaTeran}.
\end{proof}

\subsection{Fokker-Planck Operator}
\label{sec:FP_operator}
In addition to the use of the Laplace-Beltrami operator, operators with other types of invariances are possible. 
We introduce a specific Fokker-Planck operator, which is based on ideas developed in the context of nonlinear independent component analysis (NICA)~\cite{Singer_nonlinear,Kushnir2012280}.
The underlying model for the data assumes independent stochastic It\^{o} processes $p^{(i)}$, given by
\[
dp^{(i)} = \alpha^i(p^{(i)}) dt +b^i(p^{(i)}) d w^{i}, \,\, i = 1, \dots N_h, 
\]
where $\alpha^i$ and $b^i$ are unknown drift and noise coefficients, and $\dot{w}^i$ are independent white noises.
That is, the $w^i$ are Wiener processes. 
Given $N_h$ points $p^{(1)}, \cdots , p^{(N_h)} \in \R^M$ in the unobservable space $\mathcal{S}^p$, they are mapped to points $\eta^{(1)}, \cdots , \eta^{(N_h)} \in \R^{d}$ in another space $\mathcal{S}^{\eta}$ by a nonlinear transformation $\varphi$, $\eta^{(k)} = \varphi(p^{(k)})$. One assumes that an observation of the $N_h$-dimensional process $p = (p^{(1)}, p^{(2)}, \dots, p^{(N_h)} )$ is not possible, but that the result of a (nonlinear) mapping $\varphi:\mathcal{S}^p \rightarrow \mathcal{S}^\eta$ is available.

Let us connect this to the analysis of simulations in engineering. As an example we consider again crash simulations for cars. 
In the industrial design work flow, a numerical simulation is performed for a given set of design parameters observing the equations of structural mechanics and computing deformations over time. For a fixed time step, we can interpret the deformed mesh as the result of a mapping $\varphi$ from the initial mesh.
Now, besides the deterministic laws of structural mechanics, there are additional stochastic or uncertain effects.
These might be due to (random) variations of the design parameters, as is the case for reliability design or robustness studies, due to numerical errors or instabilities, due to uncertainties in the modeling assumptions, such as the material models or parameters, or other effects. 
In other words, the numerical result at a given time step is the result of deterministic effects---modeled by the mapping $\varphi$---and stochastic effects, modeled by a stochastic  It\^{o} process $p^{(i)}$ per mesh point.
Surely, whether the assumption of underlying It\^{o} processes, and in particular their independence, is valid for this practical setting is rather speculative. However, note that there are approaches in structural dynamics that use this assumption. For example, see~\cite{caddemi} for the It\^{o} formulation characterizing accumulated structural deformations. 
We conjecture that, at least for small time steps, the variation in the numerical results due to variations in material parameters can be viewed in a stochastic setting, while the independence should be viewed in an approximate fashion. 

We now sketch results from \cite{Singer_nonlinear,Kushnir2012280}, which allow an approximate distance computation in the unobservable space $\mathcal{S}^p$ by using data samples in the observable space $\mathcal{S}^{\eta}$. Considering a Taylor series of the inverse $\varphi^{-1}$ at $(\eta^{(k)}+\eta^{(l)})/2$, one can derive linear approximations to $p^{(k)}$ and $p^{(l)}$ and estimate their Euclidean distance by
\[
\|  p^{(k)} - p^{(l)} \|_{\R^M}^2 = (\eta^{(k)} - \eta^{(l)}  )^T \left[ (J_{\varphi} J_{\varphi}^T)^{-1}\left(\frac{\eta^{(k)} + \eta^{(l)}}{2}\right) \right] ( \eta^{(k)} - \eta^{(l)})  + O(\| \eta^{(k)} - \eta^{(l)}  \|_{\R^d}^4). 
\]
While we do not have access to $J_{\varphi}J_{\varphi}^T$, one can approximate it using the local covariance matrix of the observed variables using
$$
J_{\varphi}(p^{(i)})J_{\varphi}(p^{(i)})^T  = \frac{d+2}{\delta^2} C_{i,\delta} + O(\delta),
$$
where $\delta$ is the size of a ball around $p^{(i)}$. The underlying procedure is no other than a local PCA of the ellipsoid point cloud, for details see \cite{Singer_nonlinear}. 

Finally, since $(J_{\varphi} J_{\varphi}^T)^{-1}$ at $(\eta^{(k)}+\eta^{(l)})/2$ cannot be obtained that way, a second order ap\-pro\-xi\-ma\-tion of it is employed by averaging $(J_{\varphi} J_{\varphi}^T)^{-1}(\eta^{(k)})$ and $(J_{\varphi} J_{\varphi}^T)^{-1}(\eta^{(l)})$. 
We summarize this approximation in the following proposition. See \cite{Singer_nonlinear, Kushnir2012280} for details.
\begin{proposition}
	\label{propositionNICA}
	% 	\item 
	Let $p,p' \in \mathcal{S}^p \subset \R^{M}$, and let $\eta, \eta'$ be their respective mappings to the observable space $\mathcal{S}^\eta \subset \R^{d}$, $M \leq d$. Then the distance in $\mathcal{S}^p$ (using local coordinates) can be approximated as:
	\begin{equation}
		\label{distNICA}
		d(p,p')^2 := 2 (\eta - \eta'  )^T \left[ J_{\varphi}J_{\varphi}^T(\eta) + J_{\varphi}J_{\varphi}^T(\eta') \right]^{-1} ( \eta - \eta' ),
	\end{equation}
	where $J_{\varphi}$ is the Jacobian of the transformation. It holds that
	\begin{equation}
		% 	\label{singer_sym}
		\|  p -  p' \|_{\R^M}^2 = d(p,p')^2 + O(\| \eta - \eta'  \|_{\R^d}^4). 
	\end{equation}
	% 	where  $M \leq d$, and $J_{\varphi}$ is the Jacobian of the transformation. 
	The result also holds for the alternative approximation
	\begin{equation}
		d(p,p')^2 := \frac{1}{2} (\eta - \eta'  )^T \left[ (J_{\varphi} J_{\varphi}^T)^{-1}(\eta) +  (J_{\varphi} J_{\varphi}^T)^{-1}(\eta') \right] ( \eta - \eta' ).
		\label{distNICA_Alt}
	\end{equation}
\end{proposition}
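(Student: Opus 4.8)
The plan is to make explicit the Taylor-expansion argument sketched in the two paragraphs preceding the proposition, keeping track of the orders of the error terms, and to treat the two averaging variants (\ref{distNICA}) and (\ref{distNICA_Alt}) together. Write $\psi := \varphi^{-1}$ for the local inverse of the observation map, so $p = \psi(\eta)$ and $p' = \psi(\eta')$, and set $m := (\eta+\eta')/2$, $h := (\eta-\eta')/2$, so that $\eta = m+h$, $\eta' = m-h$ and $\|h\|_{\R^d} = \tfrac12\|\eta-\eta'\|_{\R^d}$. Assuming $\varphi$ (equivalently $\psi$) is $C^3$ with derivatives bounded on the region of interest, as in \cite{Singer_nonlinear,Kushnir2012280}, expand $\psi$ about $m$: $\psi(m\pm h) = \psi(m) \pm J_\psi(m)h + \tfrac12 h^T H_\psi(m) h + O(\|h\|^3)$, where the second-order term is identical for the two signs. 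Subtracting kills the quadratic contribution and gives $p - p' = J_\psi(m)(\eta-\eta') + O(\|\eta-\eta'\|_{\R^d}^3)$.

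The second step is to square this. The leading term is of order $\|\eta-\eta'\|^2$, so its cross term with the $O(\|\eta-\eta'\|^3)$ remainder is $O(\|\eta-\eta'\|^4)$, and one gets $\|p-p'\|_{\R^M}^2 = (\eta-\eta')^T [J_\psi^T J_\psi](m)(\eta-\eta') + O(\|\eta-\eta'\|_{\R^d}^4)$. By the inverse function theorem, when $M=d$ we have $J_\psi(\varphi(q)) = J_\varphi(q)^{-1}$ and hence $J_\psi^T J_\psi = (J_\varphi J_\varphi^T)^{-1}$; when $M < d$ the identity persists on the tangent space of $\varphi(\mathcal{S}^p)$ provided $(J_\varphi J_\varphi^T)^{-1}$ is read as the Moore--Penrose pseudoinverse (consistent with the fact that the local covariance matrix $C_{i,\delta}$ actually used to estimate it has rank $M$), and since $\eta-\eta'$ lies in that tangent space up to a term of order $\|\eta-\eta'\|^2$, this does not affect the claimed accuracy. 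So far this yields the exact midpoint formula $\|p-p'\|^2 = (\eta-\eta')^T [(J_\varphi J_\varphi^T)^{-1}](m)(\eta-\eta') + O(\|\eta-\eta'\|^4)$.

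The final step replaces the inaccessible midpoint matrix $[(J_\varphi J_\varphi^T)^{-1}](m)$ by a symmetric average of its endpoint values. For any $C^2$ matrix-valued function $g$ one has $\tfrac12\big(g(m+h) + g(m-h)\big) = g(m) + O(\|h\|^2)$, and the same error order holds for the variant that averages $J_\varphi J_\varphi^T$ first and then inverts, since matrix inversion is smooth near a full-rank matrix; thus both $\tfrac12\big[(J_\varphi J_\varphi^T)^{-1}(\eta) + (J_\varphi J_\varphi^T)^{-1}(\eta')\big]$ and $2\big[J_\varphi J_\varphi^T(\eta) + J_\varphi J_\varphi^T(\eta')\big]^{-1}$ differ from $[(J_\varphi J_\varphi^T)^{-1}](m)$ by $O(\|\eta-\eta'\|^2)$. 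Sandwiching that matrix error between $(\eta-\eta')^T$ and $(\eta-\eta')$ adds a further $O(\|\eta-\eta'\|^4)$, which is absorbed into the existing remainder, proving (\ref{distNICA}) and (\ref{distNICA_Alt}). The work here is bookkeeping rather than conceptual: the only things requiring care are keeping the $M\le d$ / pseudoinverse reading consistent throughout, and securing uniform bounds on the Taylor remainders and on $(J_\varphi J_\varphi^T)^{-1}$ over the neighbourhood in question so that every $O(\cdot)$ is genuinely of the stated order --- these estimates are carried out in detail in \cite{Singer_nonlinear,Kushnir2012280}.
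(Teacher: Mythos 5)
Your proposal is correct and follows essentially the same route the paper takes: the paper does not give a formal proof of Proposition \ref{propositionNICA} but sketches exactly this argument in the two preceding paragraphs (Taylor expansion of $\varphi^{-1}$ at the midpoint $(\eta+\eta')/2$, cancellation of the quadratic term, and replacement of the inaccessible midpoint matrix by the symmetric endpoint average), deferring the detailed estimates to \cite{Singer_nonlinear,Kushnir2012280}. Your write-up simply makes the order bookkeeping and the $M\le d$ pseudoinverse reading explicit, which is consistent with those references.
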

Using the distance approximation (\ref{distNICA}) or (\ref{distNICA_Alt}), we define the density normalized weight matrix $W_{d}$ %distance matrix
\begin{equation}
	W_{d} := D_A^{-1/2} A D_A^{-1/2}, \quad \text{ where } \quad A_{k,l} = \exp \left(- d\left(p^{(k)}, p^{(l)}\right)^2 / \epsilon\right),
	\label{normalized_nica}
\end{equation}
with $D_A := \diag(A \cdot \mathds{1})$ and $\mathds{1}$ is the vector of all ones.
Further, $W_{d}$ can be transformed into %a row stochastic matrix
\begin{equation}
	W_{rs} := D_d^{-1} W_{d} \quad \text{ and } \quad W_s = D_d^{-1/2} W_{d} D_d^{-1/2},
	\label{row_stocha}
\end{equation}
with $D_d := \diag(W_d \cdot \mathds{1})$. %the diagonal matrix with elements $D^n_{p,p} = \sum_{p'=1}^{N_h} W_{norm}^{p,p'}$. 
The row stochastic matrix $W_{rs}$ is similar to the symmetric matrix $W_s$ in the sense that they
share the same eigenvalues and the corresponding eigenvectors are related by $\psi_s = D_{d}^{1/2} \psi$, with $\psi$ denoting an eigenvector of $W_{rs}$ and $\psi_s$ an eigenvector of $W_s$~\cite{Chung:1997}.

It has been demonstrated in \cite{Singer_nonlinear} that the discrete operator%, with $D_{rs} := \diag(W_{rs} \cdot \mathds{1})$, 
\begin{equation}
	L =  %D_{rs}^{-1} 
	W_{rs} - I,
	\label{rowstochastic_nica}
\end{equation}
for $A$ as in (\ref{normalized_nica}), converges to a Fokker-Planck operator in the non-observable space $\mathcal{S}^p$
\begin{equation}
	\label{Nica_continuous}
	\mathcal{L}_{\mathcal{S}^p} f= \Delta f - \nabla U \cdot \nabla f, \text{ with } \quad U = -2 \log \mu.
\end{equation}
Here $\mu$ is the density of points in $\mathcal{S}^p$, which is (re)constructed from the density of data points in the observable space and the implicitly observed transformation $\varphi$. 
Therefore, through this construction we are approximating an operator in $\mathcal{S}^p$. 
Under suitable conditions it has been demonstrated that the eigenvector corresponding to the first non-trivial eigenvalue of the operator (\ref{rowstochastic_nica}) is actually a function of the first non-observable variable, the second eigenvector is a function of the second non-observable variable, and so on~\cite{Singer_ica}.

As outlined, we propose to use this setting for numerical simulations of a PDE model subject to parameter changes and other stochastic effects. 
To construct the discrete Fokker-Planck operator, we take the nodes of a surface mesh $K$. 
We are considering a simulation bundle where each simulation is assumed to be obtained through a stochastic realization $\varphi^i$ of a nonlinear transformation  $\varphi$ from a reference mesh. %, $\varphi^i: \mathcal{S} \rightarrow \mathcal{S}^{i}$. 
In this setting, a set of $m$ simulations is available to us, and based on this information, we can now construct an operator which is invariant to the nonlinear transformation $\varphi$. 
One starts with a reference mesh and considers the simulation results at a time step as a cloud around each mesh point. 
Then from this cloud, the local sample covariance matrix $C^{(k)}$ is obtained and one approximates $J_{\varphi}(p^{(k)})J_{\varphi}^T(p^{(k)})$ and its inverse.
This allows us to evaluate expression (\ref{distNICA}) locally at each point $p^{(k)}$ of the reference simulation. 
In some sense, we estimate the deterministic effect over the bundle of numerical simulations by this procedure. 
Most noteworthy, the obtained discrete Fokker-Planck operator can be considered to be invariant to the transformation $\varphi$, since we used a data-driven distance which is invariant to the deterministic effect. Therefore it is the same operator for all of the $m$ surfaces stemming from the numerical simulations.

Algorithm \ref{alg:algNICA} describes the general procedure for the evaluation of this operator. 

\begin{algorithm}
	\DontPrintSemicolon
	\SetKwComment{tcmy}{$\triangleright$ }{}
	\KwIn{
		data set bundle $\{x^j\}, j=1, \ldots, m$ of surfaces embedded in $\R^3$, where each $x^j$ is given by $N_h$ points $p^{j,(k)}$
	} 
	\Parameter{$\epsilon$, $p$}
	\KwOut{first $p$ eigenvectors of a discrete Fokker-Planck operator}
	\ForEach(\tcmy*[f]{estimate local Jacobi matrices at $p^{(k)}$}){$k \in \{1,\ldots,N_h\}$}{
		% 		estimate local covariance matrix $C_{k,\delta}$ from data set bundle $\{x^j\}$ after~(\ref{expectation})\;
		$\mathbf{JJt}[k] = \mbox{(pseudo)inverse of } C_{k}$, with $C_{k}$ local sample covariance matrix from $p^{j,(k)}$ \;% of data set bundle\;% $\{x^j\}$\;  
	}
	\ForEach(\tcmy*[f]{calculate weight matrix}){$l,k \in \{1,\ldots,N_h\}^2$}{
		$\mathbf{A}[k,l] = \exp(-d(p^{(k)},p^{(l)})^2/\epsilon)$, where $d(p^{(k)},p^{(l)})$ after (\ref{distNICA}) and using $\mathbf{JJt}[k]$\;
	}
	compute $W_{rs}$ and $D_{d}$ using (\ref{normalized_nica}), (\ref{row_stocha}) \tcmy*[r]{density normal.\enspace row stoch.\enspace weight matrix}
	solve $[\mathbf{U},\mathbf{E}] = \text{EVD}(W_{rs})$ \tcmy*[r]{compute eigendecomposition}
	\Return $p$ first non-trivial eigenvectors $D_{d}^{1/2} \mathbf{U}$\;   
	\caption{Spectral decomposition of a discrete Fokker-Planck operator}
	\label{alg:algNICA}
	
\end{algorithm}

\bigskip
\label{sec:decay}
As explained, we use a spectral decomposition of an operator that is invariant under a specific transformation, 
and project a set of mesh functions along the resulting basis. %for functions defined on a set of meshes. 
We now conjecture that, under certain assumptions, one can expect a strong decay of the spectral coefficients depending on the smoothness properties of the data. 
 
\begin{conjecture}[Decay of the Spectral Coefficients]
	Let the assumptions from section \ref{sec:FP_operator} hold and let the orthogonal decomposition based on a Fokker-Planck operator as in corollary \ref{proposition_main1} be given. Then, for smooth functions $f^i: \Ms^i \rightarrow \R, f^i \in C^k, i=1, \ldots, m$, the spectral coefficients $\alpha_j^i = \langle f^i|_K, \psi_j \rangle, j=1, \ldots, N_h$ of their mesh representation $f^i|_K$ decay as
	\[
	| \alpha_j^i |   \leq \frac{C}{(\gamma_j)^{k}} \| (f^i)^{(k)} \|_{L^2}, 
	\]
	depending on the degree of smoothness $k$ of the functions $f^i, i=1,\ldots,m$, and $\gamma_{j+1} \geq \gamma_j$, where the $\gamma_j$ are independent of $f^i$. 
	\label{decaybig}
\end{conjecture}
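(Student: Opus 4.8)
The plan is to reduce the conjecture to the classical decay rate for the eigenfunction expansion of a self-adjoint elliptic operator, and then to transfer that estimate from the continuous Fokker--Planck operator $\mathcal{L}_{\mathcal{S}^p}$ of (\ref{Nica_continuous}) to its discrete approximation $W_{rs}-I$ produced by Algorithm~\ref{alg:algNICA}. The argument thus splits into a continuous part, which I expect to be routine, and a discretization part, which I expect to be the real obstruction and the reason the statement is only a conjecture.

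For the continuous part I would first pass to a self-adjoint picture. The row-stochastic $W_{rs}$ is, as noted after (\ref{row_stocha}), similar to the symmetric $W_s$ via $\psi_s = D_d^{1/2}\psi$, so without loss of generality one works with the symmetric conjugate; in the continuum, writing $U = -2\log\mu$ one has $e^{-U}=\mu^2$ and $\mathcal{L}_{\mathcal{S}^p}f = \Delta f - \nabla U\cdot\nabla f = \mu^{-2}\,\nabla\!\cdot(\mu^2\nabla f)$, which is self-adjoint and nonpositive on $L^2(\mathcal{S}^p,\mu^2\,dx)$. Hence I may assume a discrete real spectrum $0=\lambda_0\le\lambda_1\le\lambda_2\le\cdots$ with $(-\mathcal{L}_{\mathcal{S}^p})\psi_j=\lambda_j\psi_j$ and an orthonormal eigenbasis $\{\psi_j\}$, and I set $\gamma_j := \lambda_j^{1/2}$. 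With this choice $\gamma_j$ is nondecreasing in $j$ and depends only on the operator --- that is, on the density $\mu$ in $\mathcal{S}^p$ and the metric recovered through (\ref{distNICA}) --- and not on any $f^i$, as required. Here $f^i$ is read as a function on $\mathcal{S}^p$ after transport by the implicitly observed $\varphi$, and $f^i|_K$ is the corresponding nodal vector.

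The decay itself is then the standard bootstrap. From the eigenvalue equation and self-adjointness, for any $s\ge 0$ (via the spectral functional calculus when $s$ is not an integer),
\begin{equation*}
\lambda_j^{\,s}\,\alpha_j^i = \big\langle f^i,\,(-\mathcal{L}_{\mathcal{S}^p})^{s}\psi_j\big\rangle = \big\langle (-\mathcal{L}_{\mathcal{S}^p})^{s} f^i,\,\psi_j\big\rangle ,
\end{equation*}
so Cauchy--Schwarz gives $|\alpha_j^i|\le \lambda_j^{-s}\,\|(-\mathcal{L}_{\mathcal{S}^p})^{s} f^i\|_{L^2}$. Taking $s=k/2$ and invoking elliptic regularity of the second-order operator $\mathcal{L}_{\mathcal{S}^p}$ --- equivalently, the identification of the domain of $(-\mathcal{L}_{\mathcal{S}^p})^{k/2}$ with a Sobolev space $H^k$ on the fixed bounded smooth domain, obtained by interpolation from the integer cases --- bounds $\|(-\mathcal{L}_{\mathcal{S}^p})^{k/2} f^i\|_{L^2}$ by $\|(f^i)^{(k)}\|_{L^2}$ up to lower-order terms absorbed into the constant. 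This yields $|\alpha_j^i|\le C\,\gamma_j^{-k}\,\|(f^i)^{(k)}\|_{L^2}$, and Weyl's law (with rate governed by $\dim\mathcal{S}^p$) turns this into a genuine polynomial decay in $j$, consistent with the numerical observations of section~\ref{sec:applications}.

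The last and hardest step is the transfer to the discrete operator. One would invoke the convergence of $W_{rs}-I$ --- built from the data-driven distance (\ref{distNICA}) and the local-covariance estimate $C_{i,\delta}$ of $J_\varphi J_\varphi^T$ --- to $\mathcal{L}_{\mathcal{S}^p}$ in the spirit of \cite{Singer_nonlinear,Kushnir2012280,Coifman}, deduce spectral convergence of the discrete eigenpairs to the continuous ones and convergence of the nodal inner product $\langle f^i|_K,\psi_j\rangle$ to $\langle f^i,\psi_j\rangle_{L^2(\mu^2)}$ through the associated quadrature, and conclude that the discrete coefficients inherit the continuous decay with $\gamma_j$ the limiting eigenvalue square roots. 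The obstacles are substantial: the distance (\ref{distNICA}) is itself only a second-order approximation with $O(\|\eta-\eta'\|^4)$ error, and $J_\varphi J_\varphi^T$ is recovered only up to $O(\delta)$ plus a finite-sample error over the $m$ simulations, so the discrete operator approximates $\mathcal{L}_{\mathcal{S}^p}$ only to these compounded accuracies; spectral convergence rates for random-walk/graph operators with non-uniform sampling density and an estimated, spatially varying metric are delicate and degrade as $j$ grows, so the clean bound can realistically be expected only for $j$ below a resolution-dependent cutoff --- which, however, is precisely the range of coefficients the method actually uses; and keeping $C$ and the $\gamma_j$ genuinely independent of $f^i$ requires the above norm equivalences to hold uniformly through the discretization. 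A fully rigorous version would therefore likely be stated for interior points, for $j\le J(\epsilon,\delta,m)$, and in an asymptotic rather than finite-sample regime --- which is why we record it here only as a conjecture.
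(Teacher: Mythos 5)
Your proposal is sound and mirrors the paper's overall architecture: a decay estimate established at the level of the continuous Fokker--Planck operator, followed by an honest acknowledgment that transferring it to the discrete operator $W_{rs}-I$ is the genuinely open step --- the paper likewise only points to spectral-convergence results for graph Laplacians (consistency of eigenvectors and eigenprojections) as a possible bridge, and this shared gap is exactly why the statement is a conjecture. Where you diverge is in the continuous part. The paper never argues on the full $d$-dimensional operator: it first invokes the separability of $\Ls_{\mathcal{S}^p}$ into a sum of independent one-dimensional Fokker--Planck operators $\Ls_l$ on intervals with Neumann conditions (proposition~\ref{Sturm}, a consequence of the assumed independence of the It\^{o} processes, so that the eigenfunctions are tensor products of one-dimensional eigenfunctions), and then quotes the classical Sturm--Liouville coefficient-decay bound of proposition~\ref{Exponential} for each one-dimensional factor, combining these bounds to define the $\gamma_j$. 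Your route --- self-adjointness of $\mu^{-2}\nabla\cdot(\mu^{2}\nabla\,\cdot\,)$ on $L^2(\mu^2\,dx)$, the functional-calculus identity $\lambda_j^{k/2}\alpha_j^i=\langle(-\Ls_{\mathcal{S}^p})^{k/2}f^i,\psi_j\rangle$, Cauchy--Schwarz, and elliptic regularity --- is more general in that it needs no product structure, and it makes the $\gamma_j$ explicit as $\lambda_j^{1/2}$; the price is a weaker exponent ($\lambda_j^{-k/2}$ versus the $\lambda_j^{-k}$ of proposition~\ref{Exponential}; both are admissible since the conjecture leaves the $\gamma_j$ unspecified) and the loss of the tensor-product separation, which is what ties the decay to the independent-component interpretation the paper exploits elsewhere. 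Two small cautions: identifying the domain of $(-\Ls_{\mathcal{S}^p})^{k/2}$ with $H^k$ for $k\geq 2$ requires boundary compatibility conditions beyond plain Neumann, a caveat parallel to the ``provided specific boundary conditions are satisfied'' hypothesis of proposition~\ref{Exponential} and worth stating explicitly; and your reading of $f^i$ as a function on $\mathcal{S}^p$ after transport by $\varphi$ should be flagged as an additional modeling assumption, since the paper only ever evaluates $f^i$ on the observed meshes.
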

At least for a simplified setting, this partially explains the observed strong decay of the spectral coefficients. 
For a theoretical justification of this conjecture, we employ the following observation from \cite{Singer_ica}.
\begin{proposition}
	\label{Sturm}
	Let the assumptions from section \ref{sec:FP_operator} hold. Given the eigenvalue problem (\ref{Nica_continuous}) for the Fokker-Planck operator $\mathcal{L}_{\mathcal{S}^p}$, the operator can be separated as
	\begin{equation}
	\mathcal{L}_{\mathcal{S}^p} = \sum_{l=1}^{d} \mathcal{L}_l,
	\label{eq:FP_separated_sum}
	\end{equation}
	where each $\mathcal{L}_l$ is a one-dimensional Fokker-Planck operator in an interval $(a_l, b_l)$ with specific Neumann boundary conditions. 
	The eigenfunctions of $\Ls_{\mathcal{S}^p}$ are tensor products of the one-dimensional eigenfunctions for the $\mathcal{L}_l$. 
\end{proposition}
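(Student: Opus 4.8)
The plan is to obtain the splitting of $\mathcal{L}_{\mathcal{S}^p}$ as a direct consequence of the statistical independence built into the model of section~\ref{sec:FP_operator}, and then to read off the tensor-product structure of the eigenfunctions from one-dimensional Sturm--Liouville theory.

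First I would work in the intrinsic coordinates $p = (p_1,\dots,p_d)$ of the non-observable space along which the postulated It\^{o} processes $dp^{(l)} = \alpha^l(p^{(l)})\,dt + b^l(p^{(l)})\,dw^l$ are mutually independent; the whole point of the NICA construction is that the nonlinear map $\varphi$ generally scrambles these coordinates, so the statement concerns $\mathcal{L}_{\mathcal{S}^p}$ written in exactly these coordinates, which is how (\ref{Nica_continuous}) is set up. Independent one-dimensional ergodic diffusions have a joint invariant density equal to the product of the marginals, so the density $\mu$ reconstructed in (\ref{Nica_continuous}) factorizes, $\mu(p) = \prod_{l=1}^{d}\mu_l(p_l)$, each $\mu_l$ supported on an interval $(a_l,b_l)$, which is bounded under the reflecting-boundary assumptions of \cite{Singer_ica}. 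Hence $U = -2\log\mu = \sum_{l=1}^{d} U_l(p_l)$ with $U_l := -2\log\mu_l$, and since $\Delta f = \sum_l \partial_{p_l}^2 f$ and $\nabla U \cdot \nabla f = \sum_l U_l'(p_l)\,\partial_{p_l} f$, we obtain
\[
\mathcal{L}_{\mathcal{S}^p} = \sum_{l=1}^{d} \mathcal{L}_l, \qquad \mathcal{L}_l f = \partial_{p_l}^2 f - U_l'(p_l)\,\partial_{p_l} f = \mu_l^{-2}\,\partial_{p_l}(\mu_l^{2}\,\partial_{p_l} f),
\]
a one-dimensional Fokker--Planck operator acting only on the variable $p_l$ in $(a_l,b_l)$.

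Next I would pin down the boundary conditions and assemble the spectrum. In the divergence form above, $\mathcal{L}_l$ is a Sturm--Liouville operator that is symmetric and non-positive on the weighted space $L^2((a_l,b_l),\mu_l^{2}\,dp_l)$; as the generator of the $l$-th diffusion confined to $(a_l,b_l)$, the reflecting boundary translates into the zero-flux condition $\mu_l^{2}(p_l)\,\partial_{p_l} f(p_l) = 0$ at $p_l = a_l,b_l$, i.e.\ a Neumann-type condition, and this is the condition making $\mathcal{L}_l$ self-adjoint with purely discrete spectrum. Sturm--Liouville theory then furnishes a complete orthonormal system $\{\phi_k^{(l)}\}_{k\ge 0}$ of eigenfunctions, $\mathcal{L}_l\phi_k^{(l)} = -\lambda_k^{(l)}\phi_k^{(l)}$. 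Since $L^2(\mathcal{S}^p,\mu^{2}\,dp) \cong \bigotimes_{l=1}^{d} L^2((a_l,b_l),\mu_l^{2}\,dp_l)$ (because $\mu^{2} = \prod_l \mu_l^{2}$ and $\mathcal{S}^p$ is, up to a null set, the product $\prod_l (a_l,b_l)$) and the $\mathcal{L}_l$ act on distinct tensor factors and therefore commute, the products $\psi_{(k_1,\dots,k_d)}(p) = \prod_{l=1}^{d}\phi_{k_l}^{(l)}(p_l)$ form a complete orthonormal system of eigenfunctions of $\mathcal{L}_{\mathcal{S}^p}$ with eigenvalues $-\sum_l \lambda_{k_l}^{(l)}$; this is the claimed tensor-product form, consistent with the discrete operator (\ref{rowstochastic_nica}) through the convergence statement cited after it.

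The main obstacle I anticipate is the rigorous handling of the endpoints $a_l,b_l$. The marginals $\mu_l$ may degenerate or blow up there, so one has to check, via Weyl's limit-point/limit-circle classification, whether each endpoint is regular or singular, and verify that the zero-flux condition indeed singles out the self-adjoint extension with discrete spectrum; this is where quantitative hypotheses on the drift and noise coefficients $\alpha^l,b^l$ (ergodicity of a diffusion that stays in a bounded interval with reflecting boundaries) are genuinely required, and where one should also confirm that the reconstructed $\mu$ truly coincides with the product of stationary marginals rather than some transient distribution. The remaining steps---factorization of $\mu$, splitting of the operator, passage to tensor products---are routine once that one-dimensional spectral input is in place; the full argument is carried out in \cite{Singer_ica}.
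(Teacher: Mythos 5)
Your proposal is correct and follows essentially the same route the paper takes (which itself only sketches the argument and defers the full derivation to \cite{Singer_ica}): the independence of the It\^{o} processes forces the reconstructed density $\mu$ to factorize into one-dimensional marginals, so $U=-2\log\mu$ is additive, the operator splits into one-dimensional Fokker--Planck/Sturm--Liouville operators with zero-flux (Neumann) conditions, and the eigenfunctions are the corresponding tensor products. Your additional attention to the endpoint classification and self-adjointness of each $\mathcal{L}_l$ is a sensible elaboration of details the paper leaves implicit, but it does not change the approach.
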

Note that the operator in expression (\ref{Nica_continuous}) is the one on the unobservable manifold $\mathcal{S}^p$, which is assumed to be a subset of $\R^d$. 
The observable space is nonlinearly transformed, and we will approximate the operator in the observable space. 
The unobservable manifold on which we are considering the Fokker-Planck operator $\Ls$ is planar. 
Accordingly, the data density is a product of $d$ one-dimensional densities, due to the assumed independence of the underlying It\^{o}-processes.
The full derivation can be found in \cite{Singer_ica}. See also \cite{Singer_nonlinear,Kushnir2012280}.

The following result is from the spectral approximation theory for Sturm-Liouville problems. See \cite[Chapter~5.2]{canuto} for details.
\begin{proposition}
	\label{Exponential}
	Let  $f \in C^k$, and its spectral coefficients  $ \hat{f}_j = \langle f, \hat \psi_j \rangle $, where $\hat \psi_j$, $j=\left\lbrace 0,1,\ldots,\right\rbrace$ are the eigenfunctions of the one-dimensional Sturm-Liouville eigen\-value problem 
	\begin{equation}
		\frac{d}{d x} \left( e^{-U}   \frac{d\hat  \psi}{d x} \right) + \lambda  e^{-U} \hat \psi = 0 \; \; x \in (-1,1),
		\label{sturm_1d1}
	\end{equation}		
	with eigenvalue $\lambda_j$.
	Provided $p$ is zero at the boundary, or specific boundary conditions are satisfied, it holds that
	\begin{equation}
		| \hat{f}_j |   \leq \frac{C}{(\lambda_j)^{k}} \| f^{(k)} \|_{L^2 (-1,1)}. \label{eq:sturm_liouvill_bound}
	\end{equation}		
\end{proposition}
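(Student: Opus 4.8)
\emph{Proof idea.}
The plan is to read (\ref{sturm_1d1}) as a (possibly singular) Sturm--Liouville eigenvalue problem and to recover (\ref{eq:sturm_liouvill_bound}) by the classical derivative--shifting argument that underlies spectral accuracy estimates. Write the problem as $\mathcal{S}\hat\psi_j = \lambda_j\hat\psi_j$ with $\mathcal{S}u := -e^{U}\bigl(e^{-U}u'\bigr)' = -u'' + U'u'$, which is exactly $-\mathcal{L}$ for the one--dimensional operator $\mathcal{L}$ of (\ref{Nica_continuous}) appearing in Proposition~\ref{Sturm}. This $\mathcal{S}$ is self--adjoint and nonnegative with respect to the weighted inner product $\langle u,v\rangle_U := \int_{-1}^{1} u\,v\,e^{-U}\,dx$, and the $\hat\psi_j$ form an orthonormal basis of $L^2\!\bigl((-1,1),e^{-U}dx\bigr)$; the coefficients in the statement are then $\hat f_j = \langle f,\hat\psi_j\rangle_U$, so Bessel's inequality gives $|\langle g,\hat\psi_j\rangle_U|\le\|g\|_U$ for every such $g$.

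The core step is that one application of $\mathcal{S}$ trades a factor $\lambda_j$ on the coefficient for two derivatives transferred from $\hat\psi_j$ to $f$. Integrating by parts twice,
\[
\lambda_j\hat f_j \;=\; \langle f,\mathcal{S}\hat\psi_j\rangle_U \;=\; \bigl[-f\,e^{-U}\hat\psi_j' + e^{-U}f'\,\hat\psi_j\bigr]_{-1}^{1} \;+\; \langle \mathcal{S}f,\hat\psi_j\rangle_U .
\]
Under the hypotheses the bracketed endpoint term drops: the Neumann conditions on $\hat\psi_j$ furnished by Proposition~\ref{Sturm} annihilate $f\,e^{-U}\hat\psi_j'$ at $x=\pm1$, while $e^{-U}f'\hat\psi_j$ vanishes either because $p=e^{-U}$ is zero at the boundary (the singular case) or because $f$ satisfies the matching boundary conditions --- this is precisely the proviso in the statement. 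Iterating gives $\lambda_j^{\,m}\hat f_j = \langle\mathcal{S}^{\,m}f,\hat\psi_j\rangle_U$ with $m=\lfloor k/2\rfloor$; when $k$ is odd one transfers the remaining derivative by a single integration by parts and uses the Rayleigh identity $\|\hat\psi_j'\|_U^2 = \lambda_j\|\hat\psi_j\|_U^2 = \lambda_j$. Applying Bessel's inequality and then bounding $\|\mathcal{S}^{\,m}f\|_U \le C\,\|f^{(k)}\|_{L^2(-1,1)} + (\text{lower order})$ --- which is possible because $\mathcal{S}$ is second order with coefficients assembled from bounded derivatives of $U$ on the compact interval $[-1,1]$ --- yields a bound $|\hat f_j|\le C\,\lambda_j^{-\nu}\|f^{(k)}\|_{L^2(-1,1)}$ with $\nu$ increasing linearly in $k$, i.e.\ (\ref{eq:sturm_liouvill_bound}).

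I expect the endpoint terms to be the main obstacle. The derivative--shift identity is legitimate only when \emph{both} contributions $f\,e^{-U}\hat\psi_j'$ and $e^{-U}f'\hat\psi_j$ vanish at $x=\pm1$; the second cannot be controlled by the size of $f$ alone and forces the extra assumption on $f$ (or on $e^{-U}$) recorded in the statement, since otherwise an undecaying endpoint remainder survives. A secondary technical point is the coefficient estimate $\|\mathcal{S}^{m}f\|_U\le C\|f^{(k)}\|_{L^2}$, which needs $U=-2\log\mu$ to possess enough bounded derivatives on $[-1,1]$; this should follow from the regularity of the reconstructed density $\mu$ under the assumptions of section~\ref{sec:FP_operator}. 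Both ingredients are worked out for the model Sturm--Liouville problems in \cite[Chapter~5.2]{canuto}, so the remaining task is to check that the present NICA--induced operator $\mathcal{S}$ meets those hypotheses and to assemble the constants.
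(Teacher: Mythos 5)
The paper itself contains no proof of this proposition: it is quoted from the spectral approximation theory for Sturm--Liouville problems with a pointer to \cite[Chapter~5.2]{canuto}. The argument you reconstruct --- rewrite (\ref{sturm_1d1}) as $\mathcal{S}\hat\psi_j=\lambda_j\hat\psi_j$ with $\mathcal{S}u=-e^{U}(e^{-U}u')'$ self-adjoint in $L^2((-1,1),e^{-U}dx)$, shift the operator from $\hat\psi_j$ onto $f$ by two integrations by parts per power of $\lambda_j$, kill the endpoint terms using either the vanishing of the coefficient $p=e^{-U}$ at $x=\pm1$ or matching boundary conditions on $f$, and finish with Bessel's inequality --- is exactly the argument of that reference, and your diagnosis of the endpoint terms as the place where the proviso of the statement enters is the right one.

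There is, however, one quantitative gap that you should not absorb into the phrase ``i.e.\ (\ref{eq:sturm_liouvill_bound})''. Each round of the derivative shift costs \emph{two} derivatives of $f$ while gaining only \emph{one} power of $\lambda_j$, so with $m=\lfloor k/2\rfloor$ iterations your argument yields $|\hat f_j|\le C\,\lambda_j^{-\lfloor k/2\rfloor}\bigl(\|f^{(k)}\|_{L^2}+\text{lower order}\bigr)$, not the stated $\lambda_j^{-k}$. In \cite{canuto} the decay $O(\lambda_j^{-m})$ is formulated in terms of $m$ applications of the full second-order operator $w^{-1}\mathcal{S}$, which presupposes $2m$ ordinary derivatives of $f$; the proposition as printed (together with the companion remark that the denominator ``would be $j^{2k}$'') silently identifies $f^{(k)}$ with $k$ such operator applications rather than with the $k$-th derivative. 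So either the exponent in (\ref{eq:sturm_liouvill_bound}) should read $k/2$, or the norm on the right should involve $2k$ derivatives of $f$; your integration-by-parts scheme, made precise, establishes the first version, and you need to say which of the two you are proving rather than asserting the displayed inequality verbatim.
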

Note that one often has $\lambda_j = \Os(j^2)$, so that the denominator would be $j^{2k}$.

\begin{proof}[Justification of conjecture \ref{decaybig}]
	We provide a sketch of a path to prove the conjecture. 
	First, the  decomposition in expression (\ref{eq:decomp_prop_main}) is constructed using approximations of the eigenfunctions of the Fokker-Planck operator $\Ls_{\mathcal{S}_p}$ of independent stochastic It\^o processes. 
	In our case, this operator can be separated into independent one-dimensional Sturm-Liouville problems due to proposition \ref{Sturm}~\cite{Singer_ica}. 
	As also shown in \cite{Singer_ica}, the first eigenfunctions of the operator given in (\ref{Nica_continuous}) provide the independent components and they correspond to the eigenfunctions of the Sturm-Liouville problem. 
	Now, proposition \ref{Exponential} establishes an estimation of the decay rate of spectral coefficients, depending on the type of Sturm-Liouville problem and its boundary conditions. Assuming it can be applied in our setting, the decay of the spectral coefficients $\hat f^i_j, j=1, \ldots N_h$ with regard to the spectral decomposition of the one-dimensional components $\Ls_l$ from (\ref{eq:FP_separated_sum}) can be approximated as in (\ref{eq:sturm_liouvill_bound}). Suitably combining the bounds for the individual $\Ls_l$ one can achieve, where the $\gamma_j$ depend on the $\lambda^l$s,
	\[
	| \hat f^i_j |   \leq \frac{C}{(\gamma_j)^{k}} \| (f^i)^{(k)} \|_{L^2}.
	\]
	
	Clearly, proposition \ref{Exponential} cannot be used here directly since only discrete data is available.
	It is an open question how to bridge the gap between the analytical result and the discrete case. 
	However, there are several recent works on point wise estimates between graph Laplacians and continuum operators or their spectral convergence, e.g.~\cite{Singer2016, Trillos.Slepcev:2016} and the references therein. 
	Currently, however, only consistency results for eigenvectors and eigenprojections have been obtained. 
	Based on these and related works, and connecting them with standard approximation results for functions on surfaces and discrete meshes, we conjecture that bounds on the spectral coefficients of discrete functions as above are attainable.
\end{proof}

\section{Data Analysis for Finite Element Simulation Bundles}
\label{sec:applications}
Finite element simulations in industry are nowadays used to study the behavior of physical objects. 
Consider our running example of car parts under deformations due to a crash. 
The development of new car models demands the creation of many simulations with variations in material parameters or the geometry.
These are analyzed and modified based on engineering judgment in a time consuming research and development process.
The deformations in a car crash are a complex mixture of different effects such as translations, rotations, different bending behaviors, or torsion. 
How these interact, in particular in relation to the changes in the model design and material parameters, is of great importance for the engineering analysis. 
We now study the application of the introduced data analysis method to such crash simulation results.

We use simulation data from a frontal crash simulation of a Chevrolet C2500 pick-up truck, a model with around 60,000 nodes from the National Crash Analysis Center\footnote{\url{http://www.ncac.gwu.edu/vml/models.html}, accessible via \url{http://web.archive.org/}}. 
We performed $m=126$ simulations\footnote{Computed with LS-DYNA \url{http://www.lstc.com/products/ls-dyna}} of a frontal vehicle crash with $\tau=17$ (saved) time steps, where the thicknesses of nine parts are varied randomly by up to $\pm 30\%$~\cite{a2013}. The parts subject to thickness changes are shown in the lower part of figure \ref{fig:truck_crash}.
The variation in the thickness of these nine parts results in different deformations of the original structure.

We mainly investigate the mesh functions $f^i_x, f^i_y,$ and $f^i_z$, one for each coordinate $x, y,$ and $z$, which describe the deformation of the mesh for a given simulation for a selected time step.  % $i=1,\ldots,N$ for all $N_s$ mesh deformation instances.

\begin{figure}
	\begin{center}
		% \begin{subfigure}[c]{0.8\columnwidth}
		% \centering
		\includegraphics[width=0.8\columnwidth]{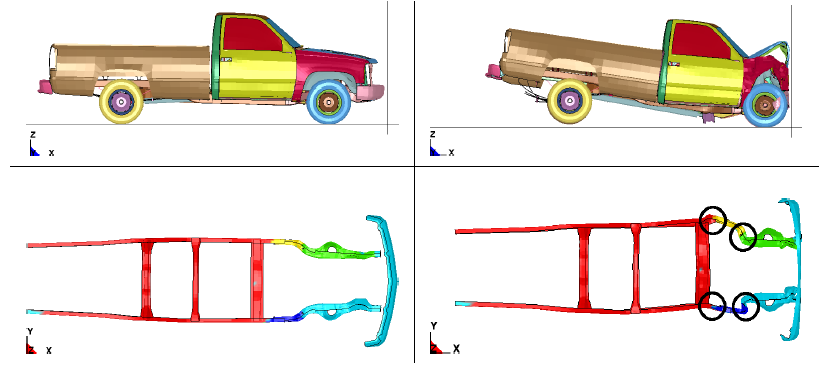}
		% \end{subfigure}
	\end{center}
	\caption{Exemplary frontal crash of the pick-up truck, shown with supporting structure. }
	\label{fig:truck_crash}  
\end{figure}

\subsection{Finite Element Signal Decomposition}
\label{decomp}
First, we investigate the quality and pro\-per\-ties of a representation of a function on a mesh stemming from crash simulations in the new basis. 
The approximation of the Laplace-Beltrami operator using geodesic distances is calculated as described in section \ref{sec:Lap_mesh} using the initial mesh configuration. Assuming the same connectivity as for the original mesh, we then compute the spectral coefficients for any mesh function $f$ with respect to the eigenvectors $\{\psi_j\}$ of the operator. The result is a set of coefficients in $\R^{N_h}$, where $N_h$ is the number of nodes of the mesh. 

\subsubsection{Decompositions of Deformation and Mesh Associated Variables}
\begin{figure}
\centering
\subcaptionbox{\label{fig:reconstructed_part}
 Using the first $p$ coefficients for reconstruction, $p=20$ (top), $p=100$ (bottom) and original data (middle)}{
\includegraphics[width=0.59\columnwidth]{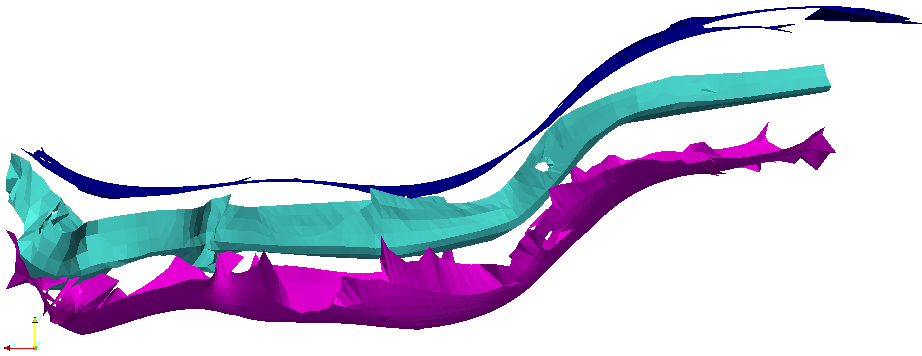}}\hfill
\subcaptionbox{\label{fig:reconstructed_coeff}
Magnitude of spectral coefficients for the $f^i_x$, $f^i_y$, and $f^i_z$ mesh functions.}{
\includegraphics[width=0.36\columnwidth]{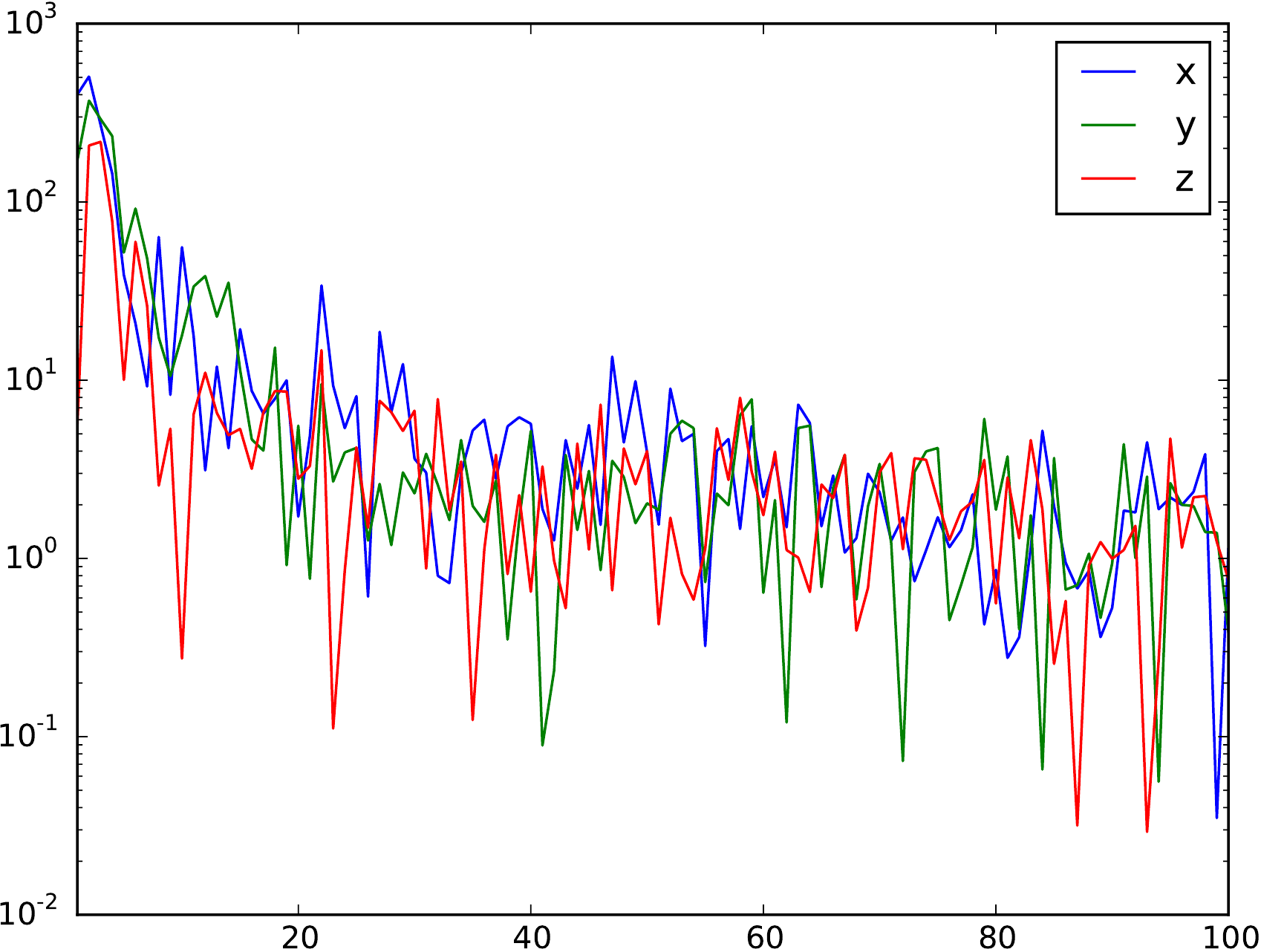}}
\caption{Analysis of a multi-scale reconstruction of a deformed shape.}
\label{Spectra}
\end{figure}

Given the mesh functions $f^i_x, f^i_y,$ and $f^i_z$, we can reconstruct the mesh deformations using  $f^i = \sum_{j=1}^{p} \alpha^i_j \psi_j$ for several values of $p$.
From figure \ref{fig:reconstructed_part}, it can be seen that using $p=20$ only a very coarse approximation of the part can be reconstructed. Adding more values, e.g. $p=100$, recovers more details of the part. 

It can also be clearly seen in figure \ref{fig:reconstructed_coeff} that most of the coefficients are small, with a few bigger ones. This is an essential feature which will be exploited for a classification task in section \ref{DM}.
We can use both the magnitude and the variance to identify a threshold for the significant components in the spectral decomposition.
Furthermore, we observed a similar empirical decay behavior for $f_x, f_y,$ and $f_z$ in other deformation applications. 
For example, very large deformations occur in the spectral coefficients arising for these mesh functions in a simulation of an airbag unfolding.

So far, we showed that the geometry of a car part can be represented with an orthogonal basis of the eigenvectors of an operator, by considering the $x,y,$ and $z$ coordinates of each mesh point as three separate mesh functions. 
But any function $f \in C^k(M)$ on the mesh can be represented by a linear combination of the eigenvectors. 
This implies that other variables associated to each node can also be represented using the same orthogonal basis. This can be used for variables like nodal strains, temperatures, velocities, and so on. 

We now demonstrate this with an example, where the nodal variable is the absolute difference of the movement of a mesh point between two time steps 
of the car crash simulation
\begin{equation}
\label{eq:fe_data_norm_diff}
f^i_k = \sqrt{\| u^i_k - v^i_k \|}, \text{ with } u^i_k,v^i_k \in R^{3}, k=1,\ldots,N_h=1714.
\end{equation}
Here $u^i_k,v^i_k$ denote the position of grid point $k$ of simulation $i$ at the time steps 6 and 7, respectively.

For figure \ref{Difference1}, the difference is reconstructed using different values of $p$. It can be seen that with $p=20$, compared with $p=200$ and with the original data, almost the original color distribution is obtained for the nodal variable. 
That is, for a qualitative analysis of the simulation results, the first 20 coefficients capture the essential part of the behavior.

\begin{figure}
\centering
\begin{subfigure}{0.3\columnwidth}
\includegraphics[width=1.0\columnwidth,height=1.0\columnwidth]{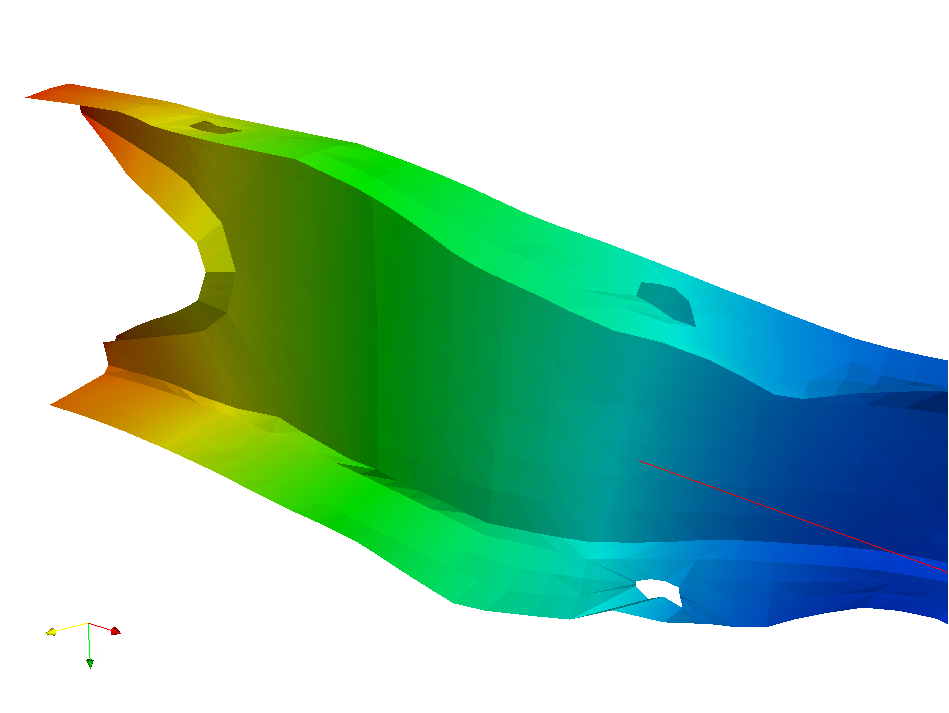}
\caption{$p=20$}
\end{subfigure}
\begin{subfigure}[c]{0.3\columnwidth}
\includegraphics[width=1.0\columnwidth,height=1.0\columnwidth]{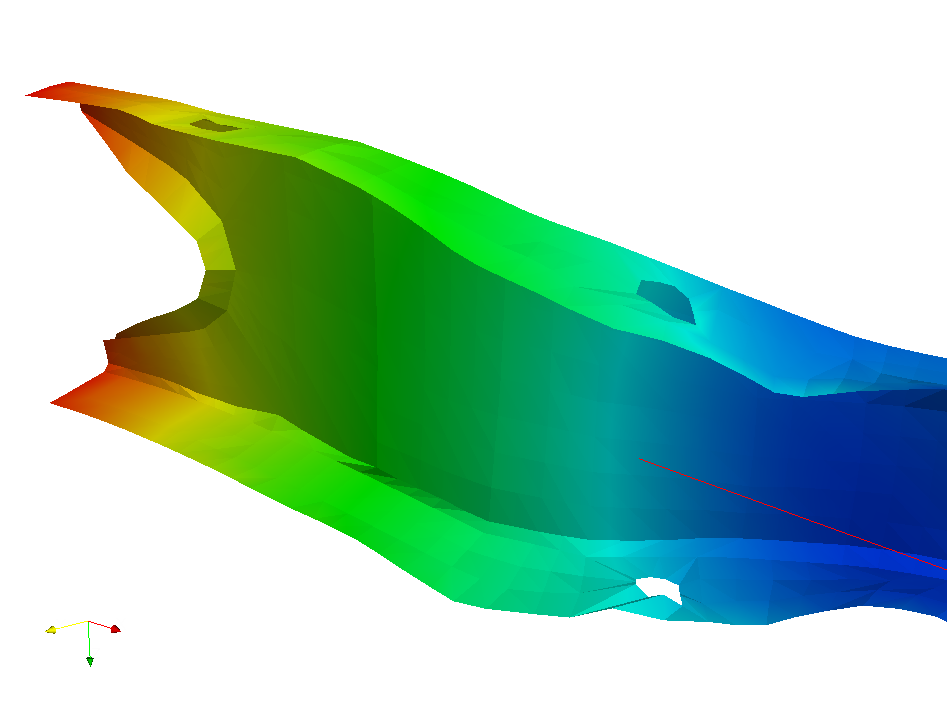}
\caption{$p=200$ }
\end{subfigure}
\begin{subfigure}{0.3\columnwidth}
\includegraphics[width=1.0\columnwidth,height=1.0\columnwidth]{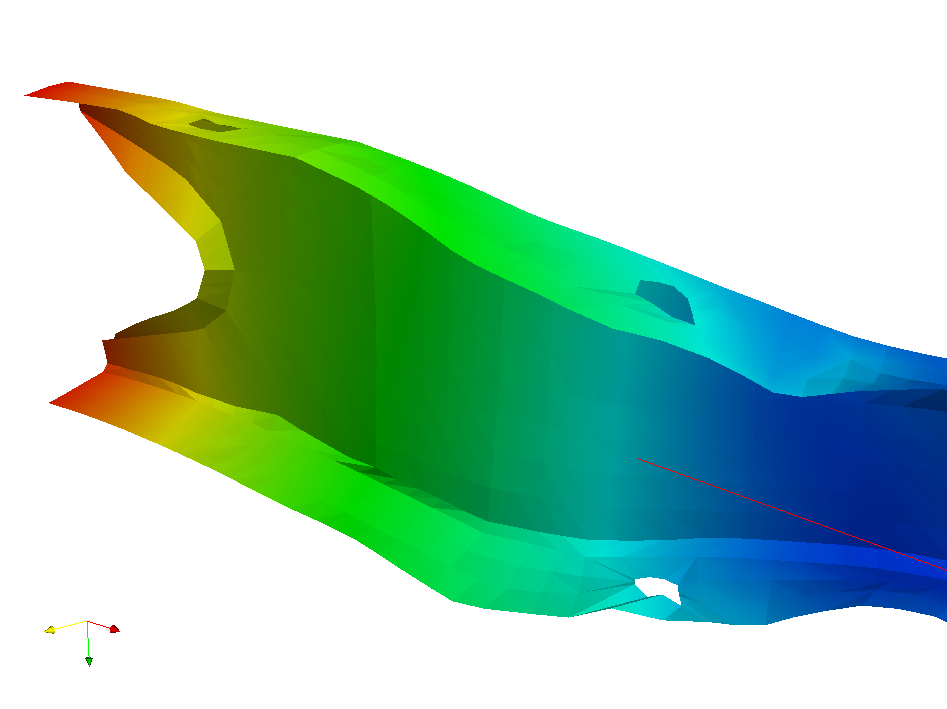}
\caption{original difference}
\end{subfigure}
\caption{Reconstruction of the differences between two time steps using $p$ coefficients. The color coding represents the absolute difference of the movement of a given mesh point. For a qualitative data analysis one captures with $p=20$ the essential behavior.}
\label{Difference1}
\end{figure}

\subsubsection{Data Analysis of Deformations in Crash Simulations}
\label{DM}
As in \cite{a2013}, we now investigate nonlinear dimensionality reduction on this data set.
Again, the norm of the difference of the deformations (\ref{eq:fe_data_norm_diff}) between the two time steps 6 and 7 was used as the nodal value of the mesh function. 
Based on this $m \times N_h$ dimensional data, a lower-dimensional embedding was computed using diffusion maps and other nonlinear dimensionality reduction procedures~\cite{a2013,DIzaTeran}.
With these data analysis approaches, we were able to successfully identify buckling modes and input parameter dependencies  

To demonstrate the usefulness of our approach, we use the spectral coefficients as input to the diffusion maps. 
As before, we now project all $m$ vectors $f^i$ along the eigenvectors of the obtained approximation of the Laplace-Beltrami operator. 
The coefficients decay very fast, similar to figure~\ref{fig:reconstructed_coeff}. 
As we presented in section 2, one can equivalently use these coefficients instead of the original mesh function of differences. 
We compare the embedding with diffusion maps using the first $p=4$ and $p=20$ coefficients with the original embedding obtained using all ${N_h}$ coefficients. 
Figure \ref{Embedding} shows a comparison of these embeddings, where each point corresponds to the embedding of one $f^i$. 
Note that one can observe a high correlation between one varied plate thickness and the clustering of the deformations.
\begin{figure}
\centering
  \begin{subfigure}{0.3\columnwidth}
    \includegraphics[width=1.0\columnwidth]{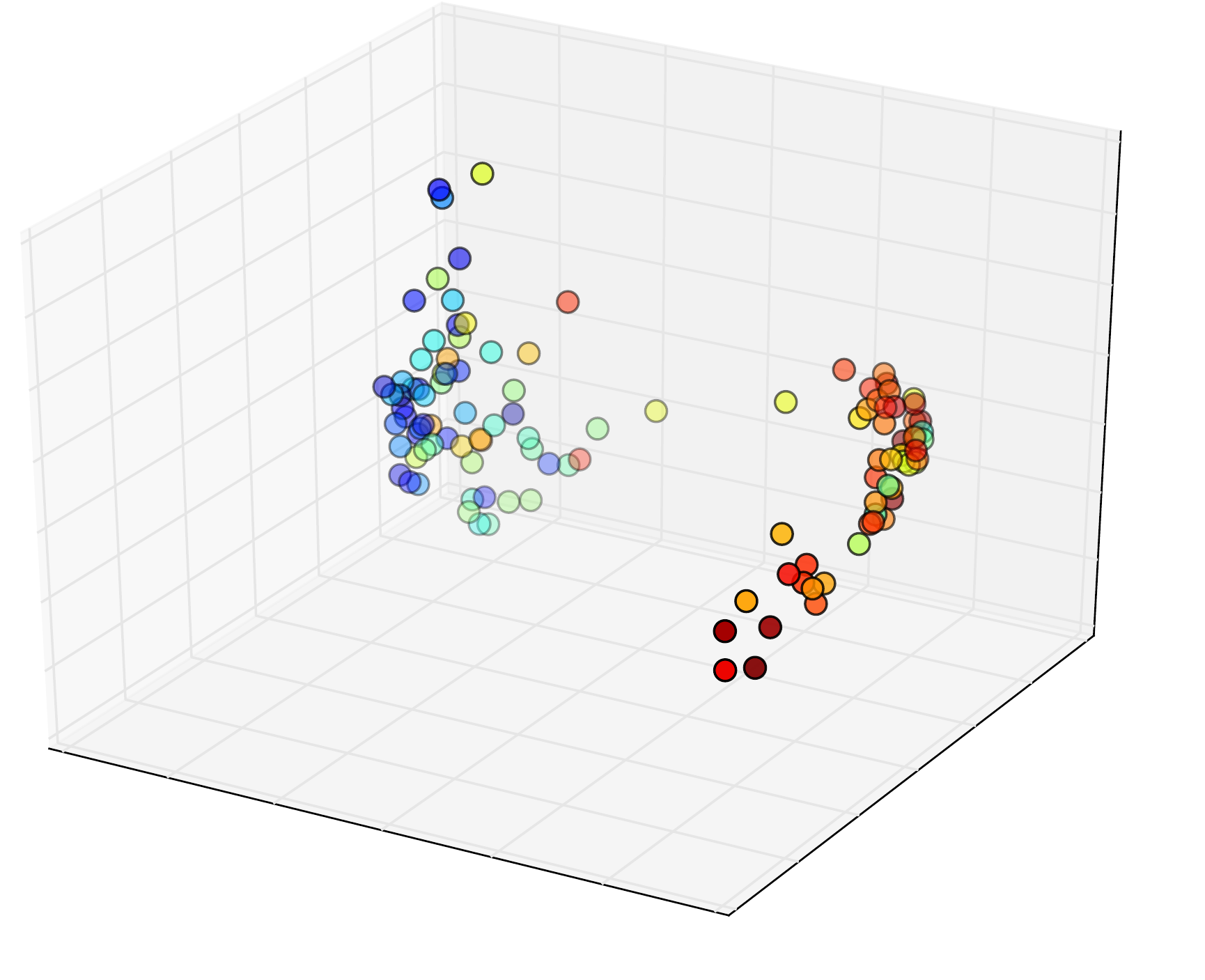}
    \caption{$p=4$}
  \end{subfigure}
  \begin{subfigure}[c]{0.3\columnwidth}
    \includegraphics[width=1.0\columnwidth]{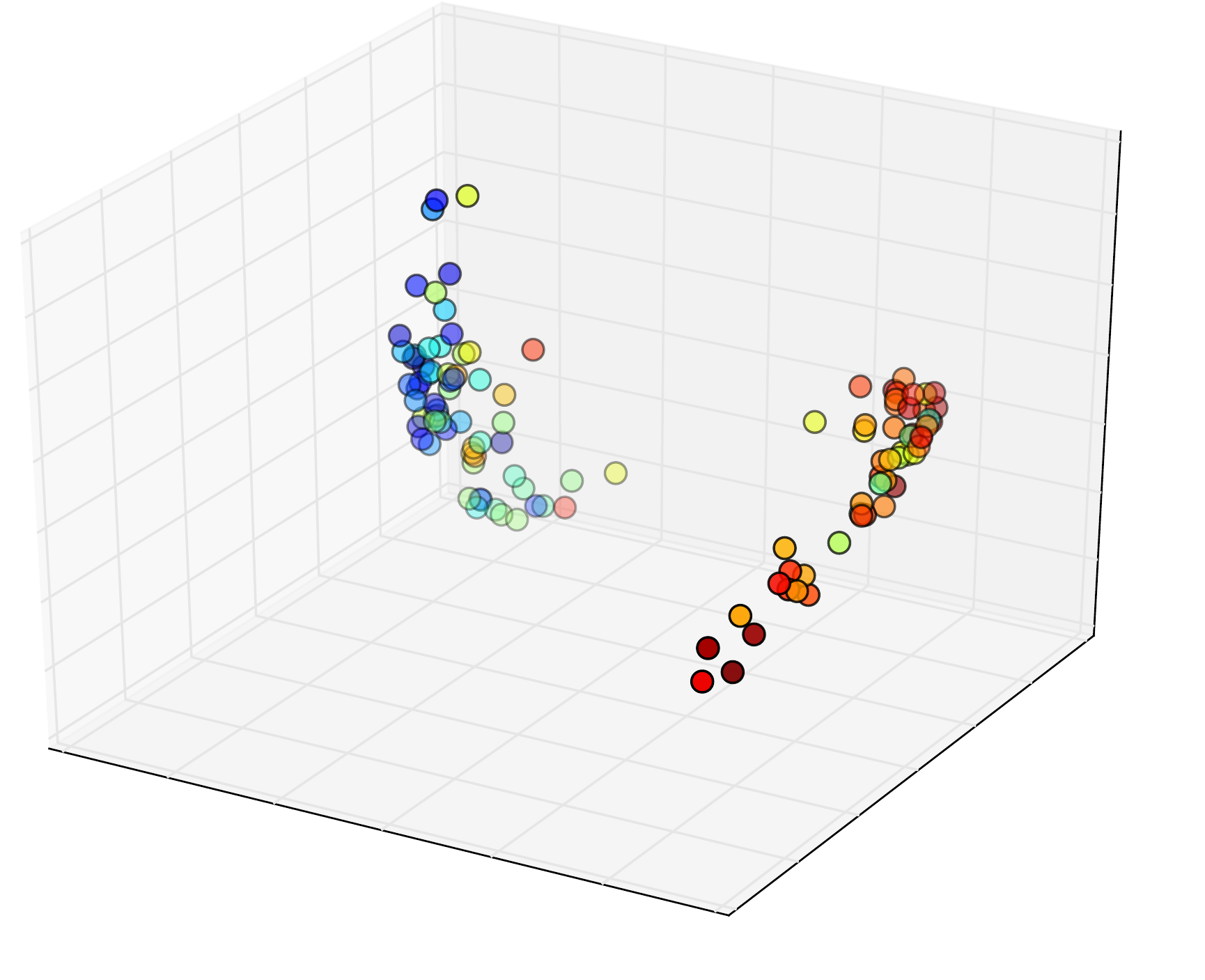}
    \caption{$p=20$}
  \end{subfigure}
  \begin{subfigure}{0.3\columnwidth}
    \includegraphics[width=1.0\columnwidth]{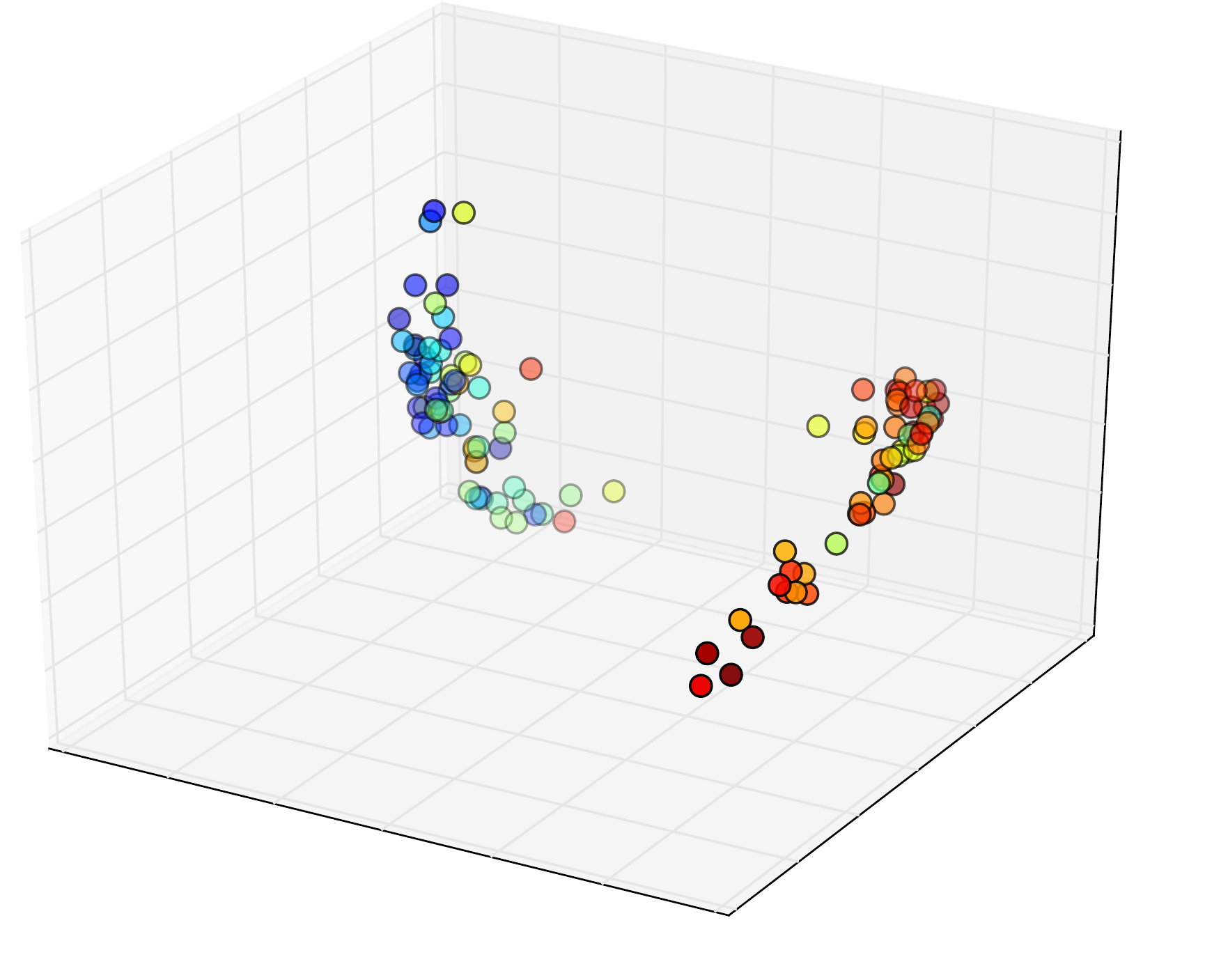}
    \caption{raw data}
  \end{subfigure}
	\caption{Comparison of diffusion maps embeddings for the car crash data using different number of projection coefficients with respect to the decomposition of the Laplace-Beltrami operator. 
	The color of the point for a simulation corresponds to the value of the plate thickness of a specific part.}
	\label{Embedding}
\end{figure}  
It is interesting to see that with only $p=4$, the structure of the embedding is almost the same as with using the original information of size $N_h=1714$. 
Therefore it can be used for classification of the bifurcation modes instead of the original one. 
This implies that the diffusion maps embedding is completely dominated by the first few components in the orthogonal decomposition, which correspond to coarse variations.

\subsection{Time Dependent Analysis of Crash Simulations}
\label{Bifurcation}
Due to the use of a common basis for all simulations and time steps, the introduced approach also allows an efficient analysis of time-dependent information.
Car crash simulations can vary strongly over time; the structure of a car can deform severely in very few milliseconds. Furthermore, an unstable behavior can originate from small variations in the material properties, initial load conditions, or numerically ill conditions. This phenomenon, buckling, is a serious problem for the robust design of car components. Relevant for an engineer is not only the identification of principal bifurcation modes, but also the study of the starting point of the unstable behavior. 
  
\begin{figure}
\begin{center}
\includegraphics[width=0.9\columnwidth]{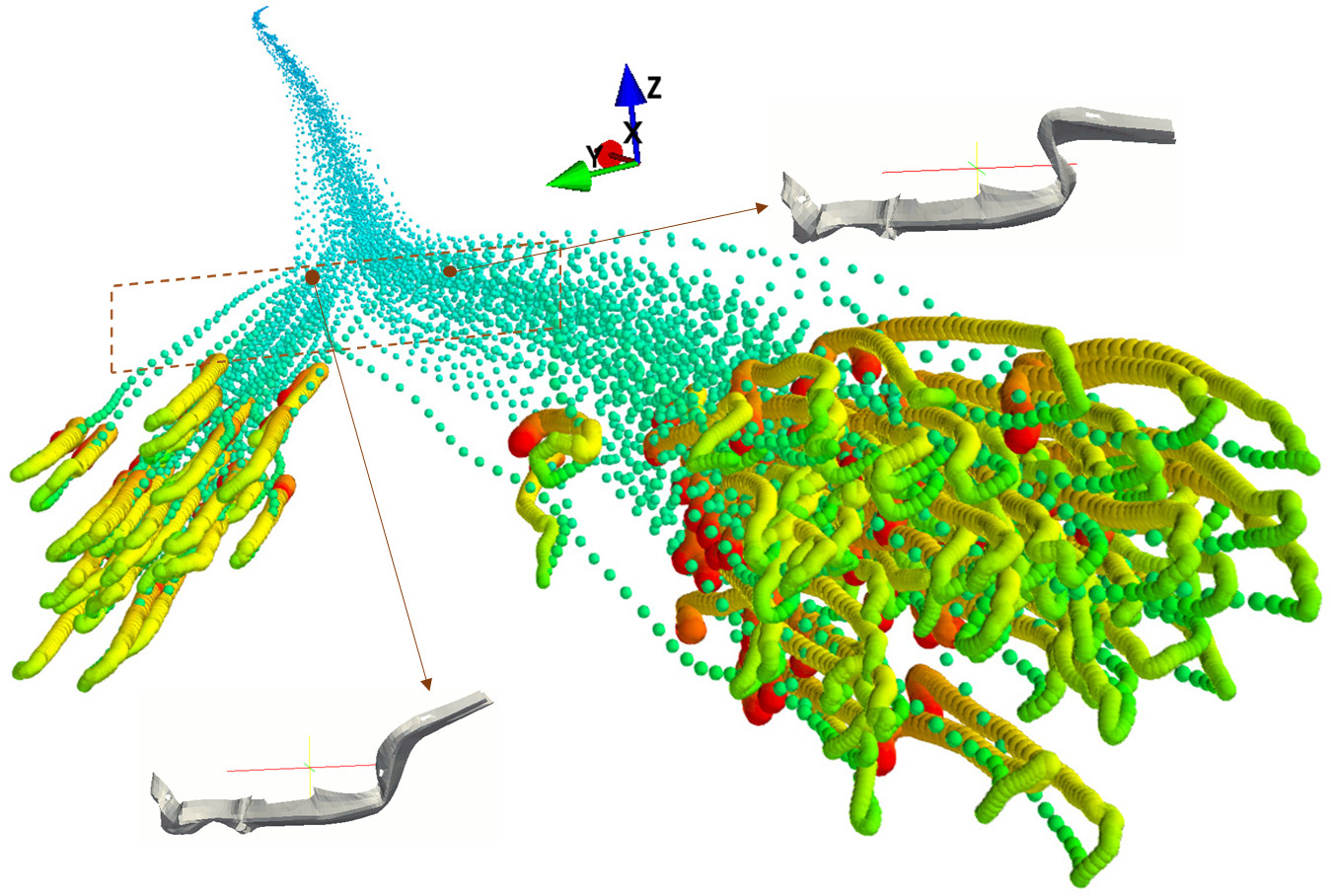}
\end{center}
\caption{Reduced 3D representation of $30400$ $(200 \text{ simulations} \times 152 \mbox{ time steps}) $ time-dependent simulation results, obtained by the spectral decomposition of the Laplace-Beltrami operator. 
Each point indicates a simulation at a specific time. 
The coordinates are the second spectral coefficients of the mesh functions $f_x^i$, $f_y^i$, and $f_z^i$ of each simulation at a time. 
The points are colored according to the corresponding time step of the simulation. Two deformation modes are clearly visible, as well as the approximate time of the bifurcation.}
\label{Time_path}  
\end{figure}

Our approach now allows a time-dependent analysis of such unstable deformation characteristics. 
We use the same Chevrolet truck example. 
To better visualize the time-dependent behavior, we employ more time steps and generate a second data set, consisting of $m=200$ simulations and $\tau=152$ time steps, where the same nine plate thicknesses as before are varied randomly.
Figure~\ref{Time_path} shows the deformation behavior over time for these simulations, where the color indicates the time. %, on one with 1714 nodes. 
To obtain this embedding, we proceed as in section \ref{sec:method}, using the Laplace-Beltrami operator in algorithm \ref{alg:LaplaceBeltrami} for the initial mesh. 
Again, we use the obtained eigenvectors to project all simulations over all time steps by considering the mesh functions $f_x^i$, $f_y^i$, and $f_z^i$ in time. 
For the visual analysis, we choose the coefficients corresponding to the second component of the orthogonal decomposition as reduced coordinates.  

There are several observations that can be made from the obtained configuration in figure~\ref{Time_path}. We see that the reduced coordinates of the simulations give an organization of the data in time. 
The low dimensional representation shows two branches that correspond to two different deformation modes of a car part.
The bifurcation clearly starts about half-way through the crash simulation, while one can see how the positions of the simulations for the last time steps appear mixed with those before.
This corresponds with the rebound effect in a crash, where the car bounces back from the obstacle after the inertia of the movement was absorbed. 

Other embedding methods can be applied to the same data, but very few facilitate treating all time steps simultaneously.
The usual approach is to use PCA for time-dependent analysis as well. 
So as a first try, let us calculate a PCA using the data of all simulations, that is, $m$ vectors of size $3\cdot N_h$, from a time step where the bifurcation is clearly present. 
The spectral coefficients obtained by projecting the mesh functions $f_x^i$, $f_y^i$, and $f_z^i$ for some selected time steps along these principal components gets the low dimensional structure shown in figure \ref{fig:time_path_svd_one}, taking the first coefficient each. This approach does not produce adequate results since the variability over all time steps is not taken into account. 
Note that using other principal components does not change this observation.
Figure \ref{fig:time_path_svd_all} illustrates that computing a PCA using all simulations and time steps does improve the results, but only to some degree. 
A time behavior is now visible and a small separation is recognizable near the end. However, the clear separation and different results due to the bifurcation cannot be recognized in the embedding, in particular not the time of origin, as is the case in figure~\ref{Time_path}.  

\begin{figure}
\centering
\subcaptionbox{\label{fig:time_path_svd_one}
Embedding for a small number of time steps based on a PCA computed from one time step.}
{\includegraphics[width=0.32\columnwidth]{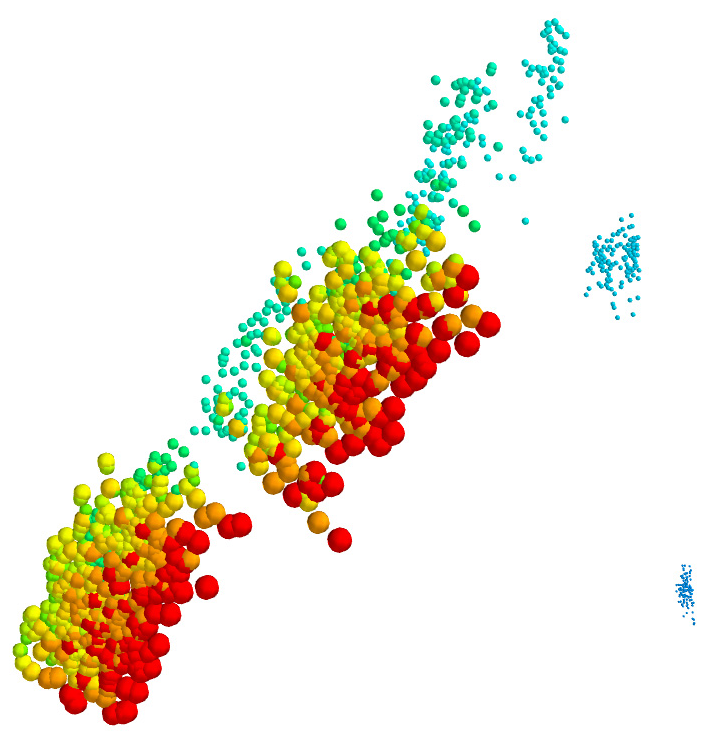}}
\hfill
\subcaptionbox{\label{fig:time_path_svd_all}
Embedding for all time steps based on a PCA computed from all time steps.}{
\includegraphics[height=0.33\columnwidth,width=0.32\columnwidth]{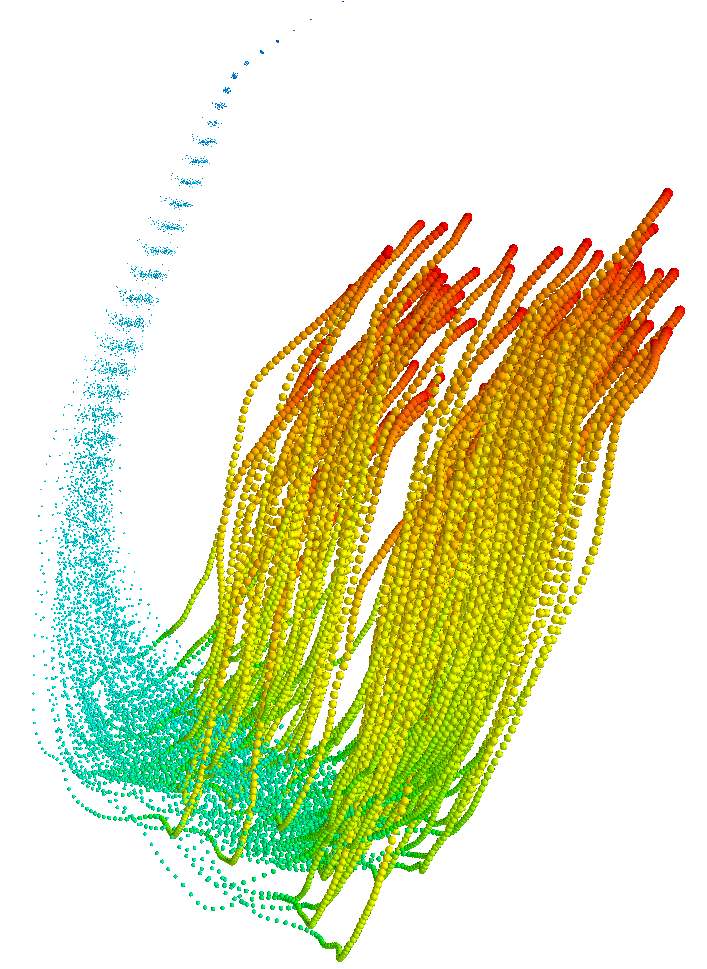}}
\hfill
\subcaptionbox{\label{fig:time_path_dm_all}
Diffusion maps embedding computed from all simulations at all time steps.}{
\includegraphics[height=0.33\columnwidth,width=0.32\columnwidth]{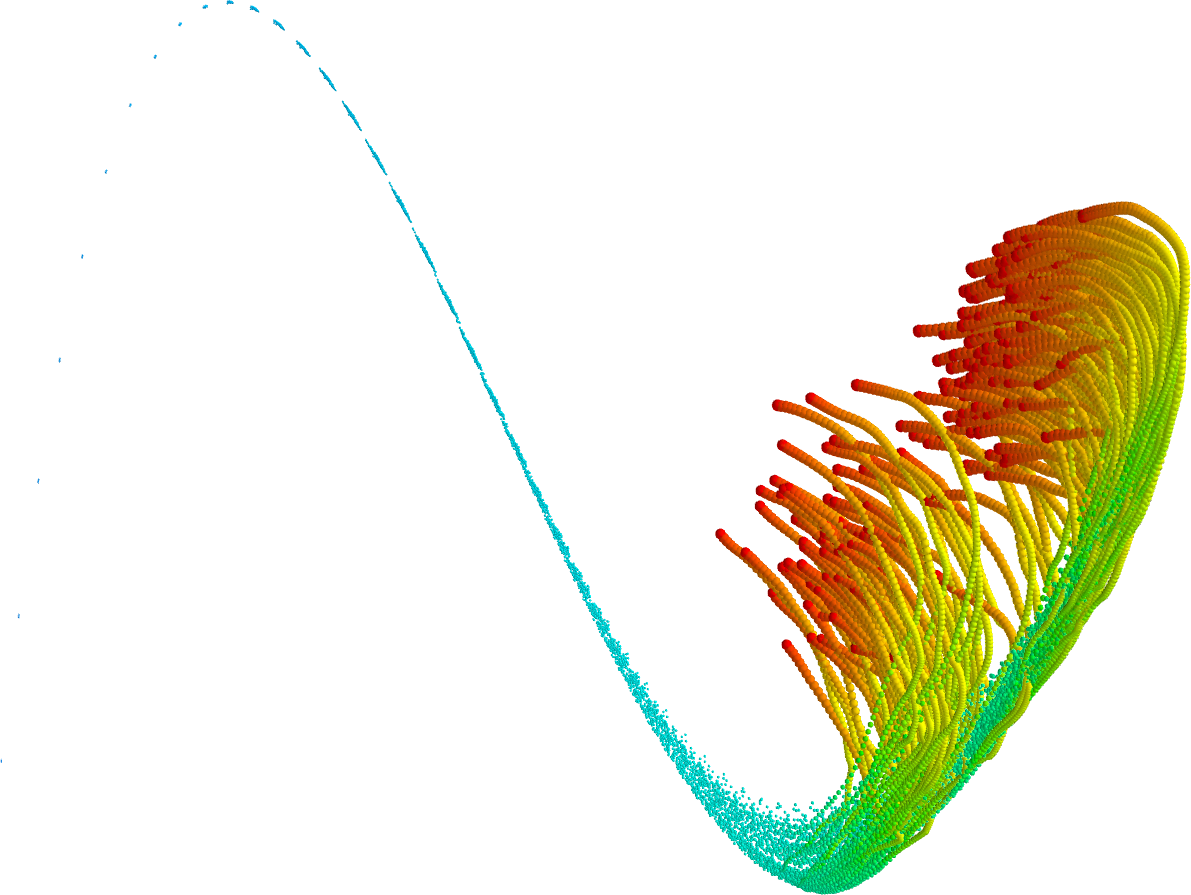}}
\caption{Reduced 3D representation of simulations obtained with PCA and diffusion maps. For PCA the coordinates are as before the first spectral coefficients of the mesh functions $f_x^i$, $f_y^i$, and $f_z^i$ of each simulation at a time. 
The points indicate a simulation at a specific time and are colored according to the corresponding time step.}
\label{Time_path_SVD} 
\end{figure}

In principle, other nonlinear dimensionality reduction methods could be used with this data set. 
One way would be to compute the embedding computed as many times as time steps are available.
But then the changes to and the switching of the eigenvectors make the recovering of a time-dependent low dimensional structure very cumbersome, if at all possible. 
Alternatively, one could attempt to embed other time steps into the coordinates obtained from one time step using the Nyström method.
However, this is as limited as the PCA example before. 
When using all simulations and all time steps for spectral nonlinear dimensionality reduction approaches, one has to deal with large full matrices of size $m\cdot\tau \times m\cdot\tau$, which can become infeasible when treating many simulation results. Furthermore, one would need to recompute the embedding when new data arrive that are not near the existing data, which is when using the Nyström method would become questionable otherwise.
As an example, we employ diffusion maps and compute the needed distance matrix by using, for each time step, a vector of length $3\cdot N_h$, consisting of the $x, y, z$-coordinates. 
The embedding obtained with diffusion maps is shown in figure~\ref{fig:time_path_dm_all}. It is somewhat similar to the PCA one.

In comparison, we need to compute an eigenvalue problem for a matrix of size $N_h \times N_h$ one time. 
This can be approximated by using hierarchical matrices. Afterwards we can project any new simulation result directly onto the obtained eigenbasis.
Note that besides the clearer interpretation of the embedding with our approach, the obtained coordinates are also more suitable as a distance measure. 
For example, consider the detection of an anomaly in the simulation results, where it is important that the distances between the two clusters are large.

\subsection{Crash Modes}
\begin{figure}
    \centering
    \begin{subfigure}{0.475\columnwidth}
        \includegraphics[width=0.9\columnwidth]{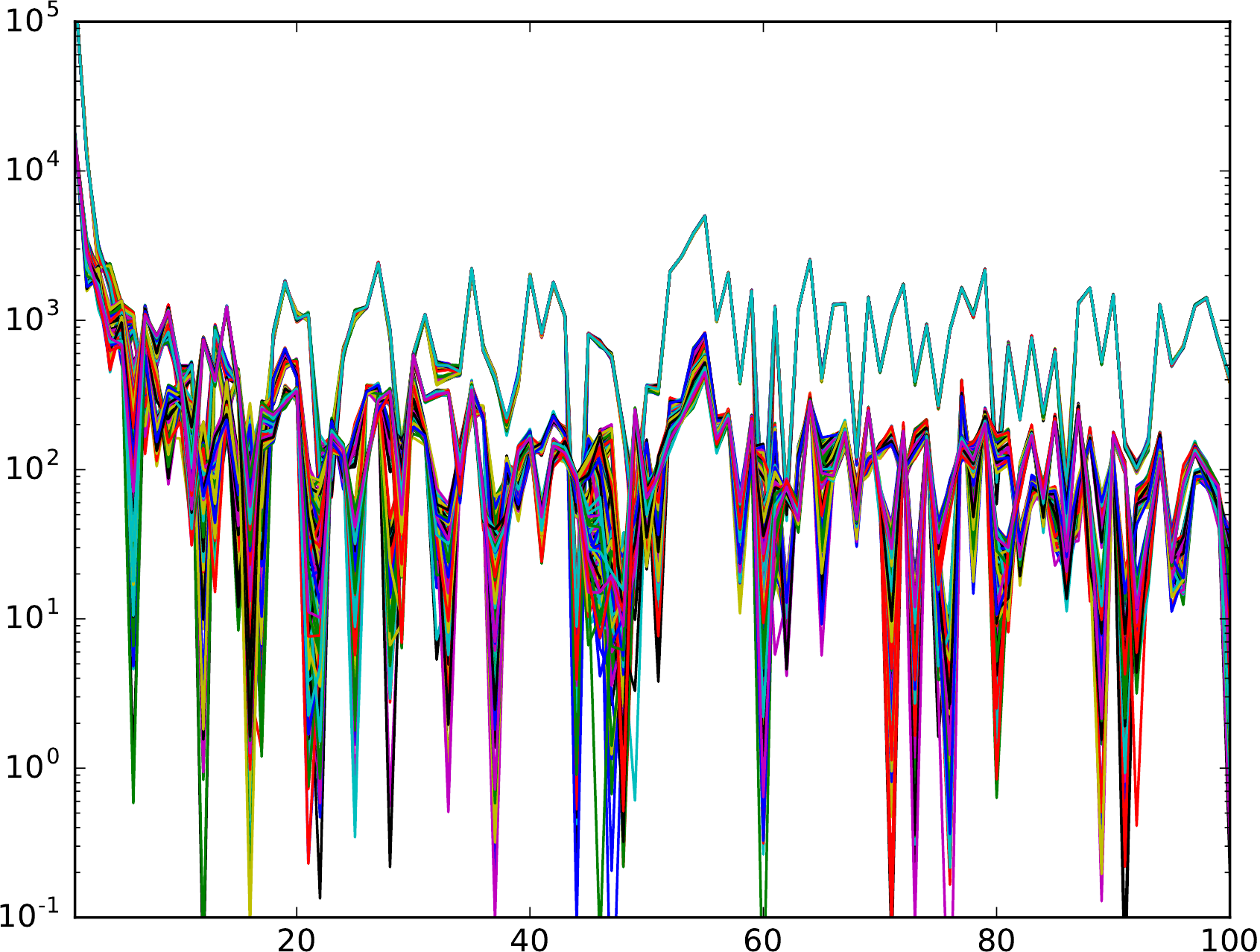}\\
        \caption{magnitude of coefficients}
        \label{fig:truck_NICA_spectral_coefficients}
    \end{subfigure}
    \begin{subfigure}[c]{0.475\columnwidth}
        \includegraphics[width=0.9\columnwidth]{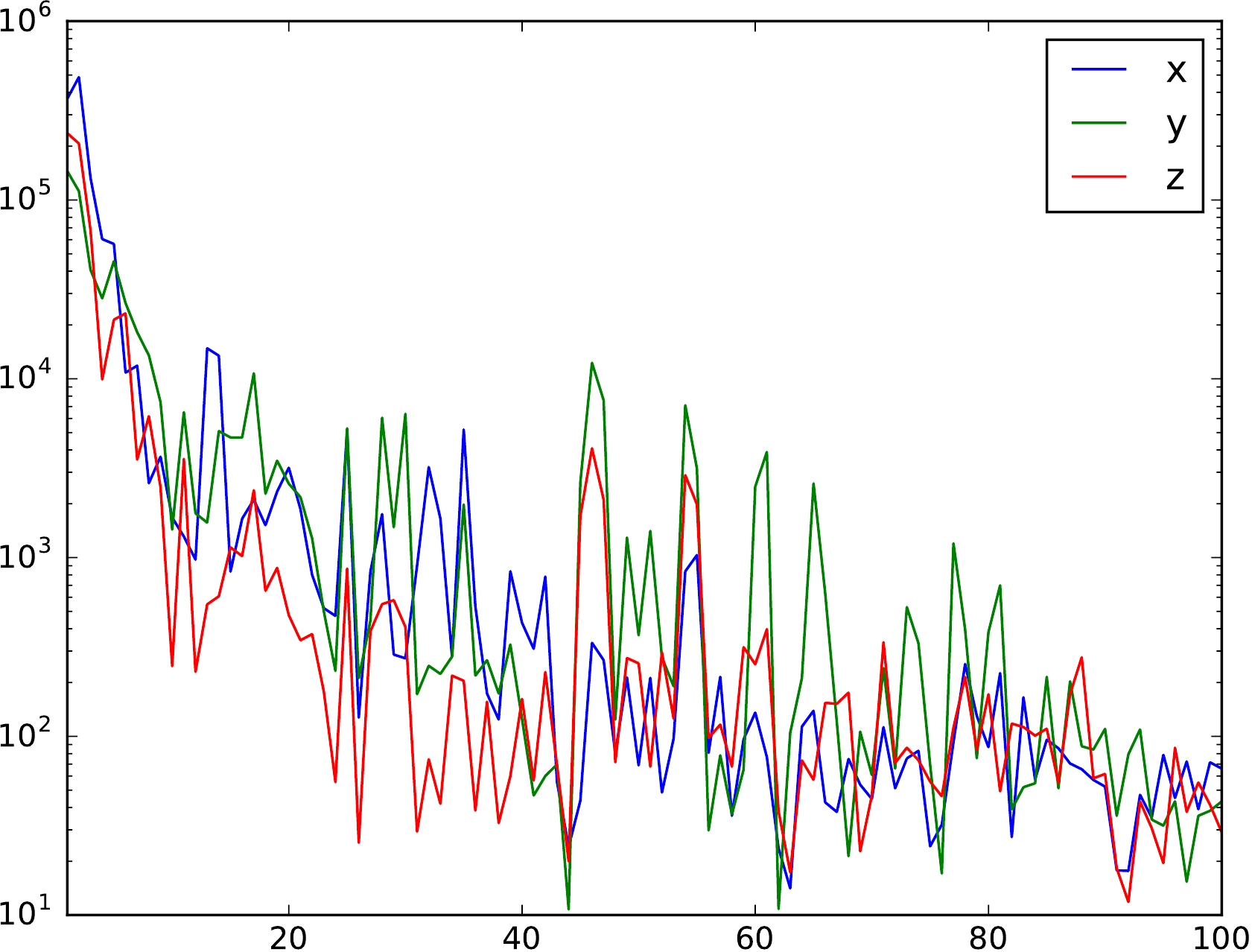}\\
        \caption{variance of coefficients}
        \label{fig:truck_NICA_variance_coefficients}
    \end{subfigure}
    \caption{Magnitude and variance of the first 100 spectral projection coefficients for the mesh functions $f^i_x$, $f^i_y$, and $f^i_z$ representing the deformation in a crash simulation, $i=1,\ldots,126$.}
    \label{fig:truck_NICA_coefficients}
\end{figure}

We again use the truck example with 17 time steps, but now study ways to identify and decompose different crash effects along so-called independent modes.
Again, we fix a specific time step (7 from 17) and compute from the shapes at this time step the discrete Fokker-Planck operator. At each mesh point, a cloud of $m$ points is formed from the $m$ displacements of this mesh point in the simulation bundle. That allows us to evaluate a local Jacobian and we can apply algorithm \ref{alg:algNICA}. 

In figure \ref{fig:truck_NICA_spectral_coefficients}, projection coefficients for mesh functions $f^i_x$, $f^i_y$, and $f^i_z$, $i=1,\ldots,m$, using the basis from the discrete Fokker-Planck operator from section \ref{sec:FP_operator} are shown. 
It can be observed that the magnitude is largest in the first, say, ten coefficients for all directions. 
Subsequently, it stays larger only for the $x$ component. 
This can be explained by the translation of the car in $x$ direction during the numerical crash simulation. Here the biggest changes are taking place. 
Nevertheless, the variance is reduced for all 3 mesh functions, as seen in figure \ref{fig:truck_NICA_variance_coefficients}. 
Note also, that both magnitude and variance are correlated between all three directions, since the basis is computed from the joint mesh.

In order to have a visual representation of the separation of effects by the spectral decomposition, we reconstruct the deformed geometry as a linear combination of the eigenvectors. 
A (re)construction of a ``virtual'' simulation reflecting only the transformations along a decomposition mode $p$ can be done by fixing the spectral coefficients for the directions $x$, $y$ and $z$ of a arbitrarily chosen simulation $j$, with exception of the $p$-th, for which the 3 coefficients are varied.
In other words, the spectral values for all but one component stem from one chosen simulation. 
The deformed geometry can be reconstructed as a linear combination of the eigenvectors. 
We consider the first four components of the orthogonal decomposition and show a few reconstructed simulations in figure~\ref{fig:truck_nica}.
As can clearly be seen, the first component reflects a translation. 
In the same fashion, we can observe that the second component corresponds to a rotation, the third to a global deformation, and the fourth to a local deformation.
The empirically observed behavior gives numerical evidence that group actions can be represented by projections along eigenvectors of invariant operators. 
Taking more coefficients, this can be extended to morph one simulation into another along the spectral components~\cite{DIzaTeran}. 

\begin{figure}
    \centering
    \subcaptionbox{translation\label{fig:truck_nica_translation_1}}{\includegraphics[width=0.475\columnwidth]{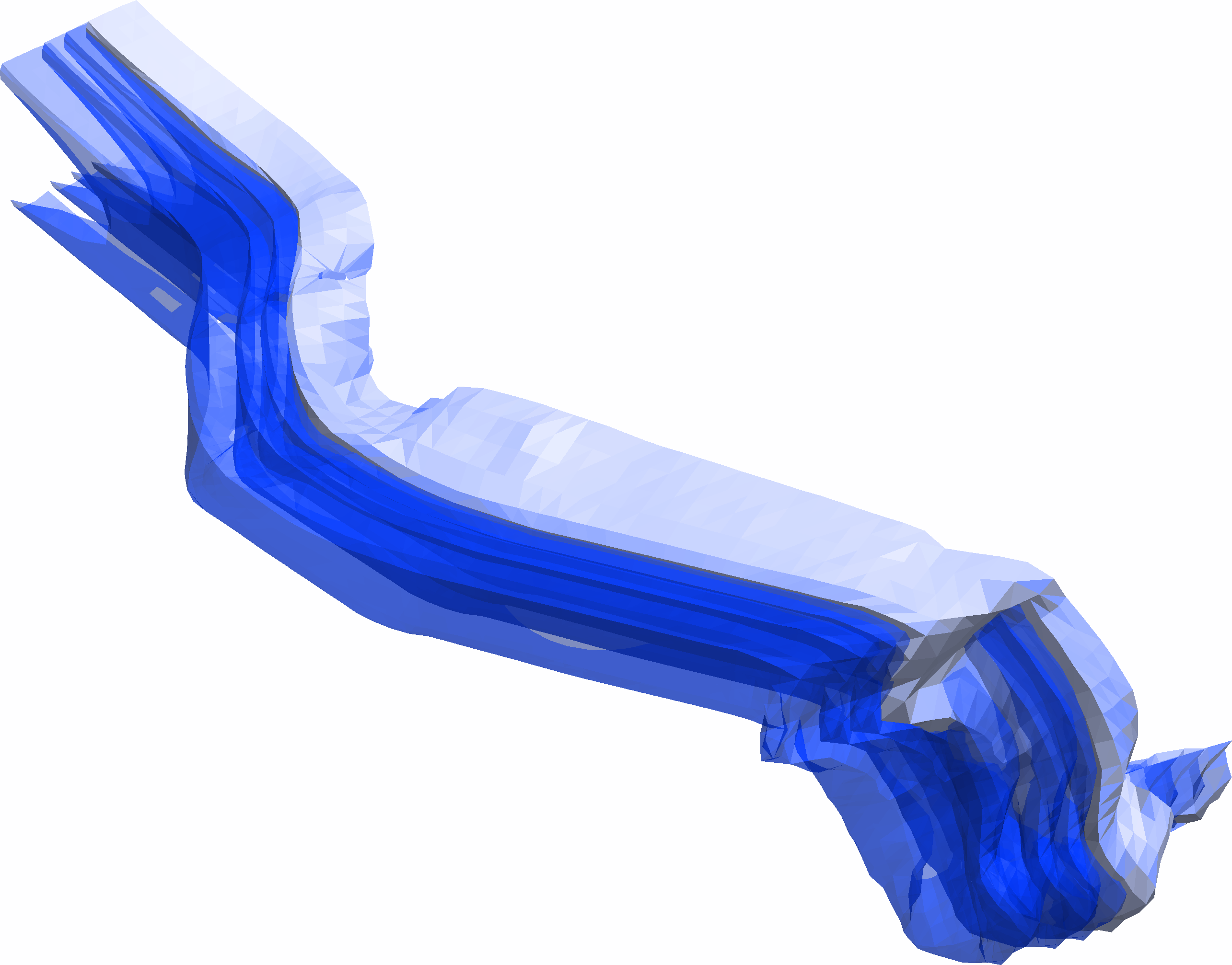}}
    \hfill
    \subcaptionbox{rotation\label{fig:truck_nica_rotation_2}}{\includegraphics[width=0.475\columnwidth]{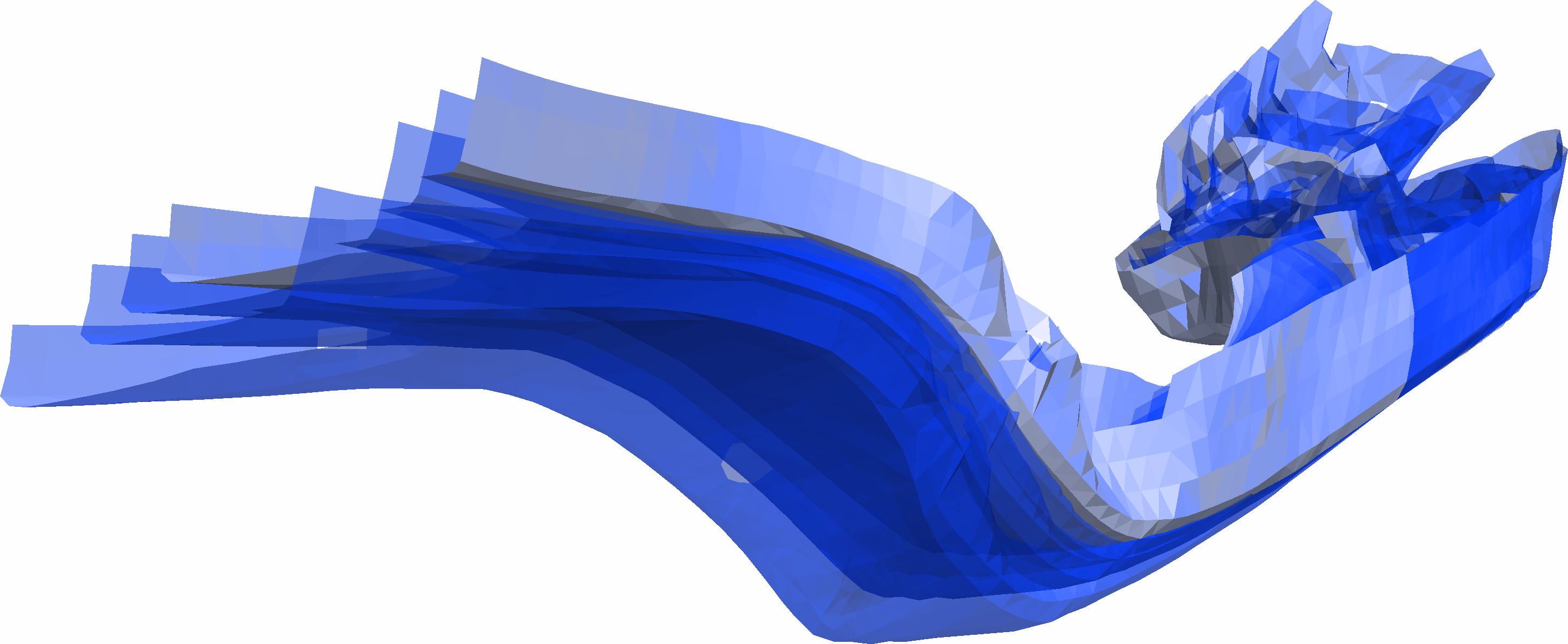}}\\
    \subcaptionbox{geometric deformation\label{fig:truck_nica_deformation_3}}{\includegraphics[width=0.475\columnwidth]{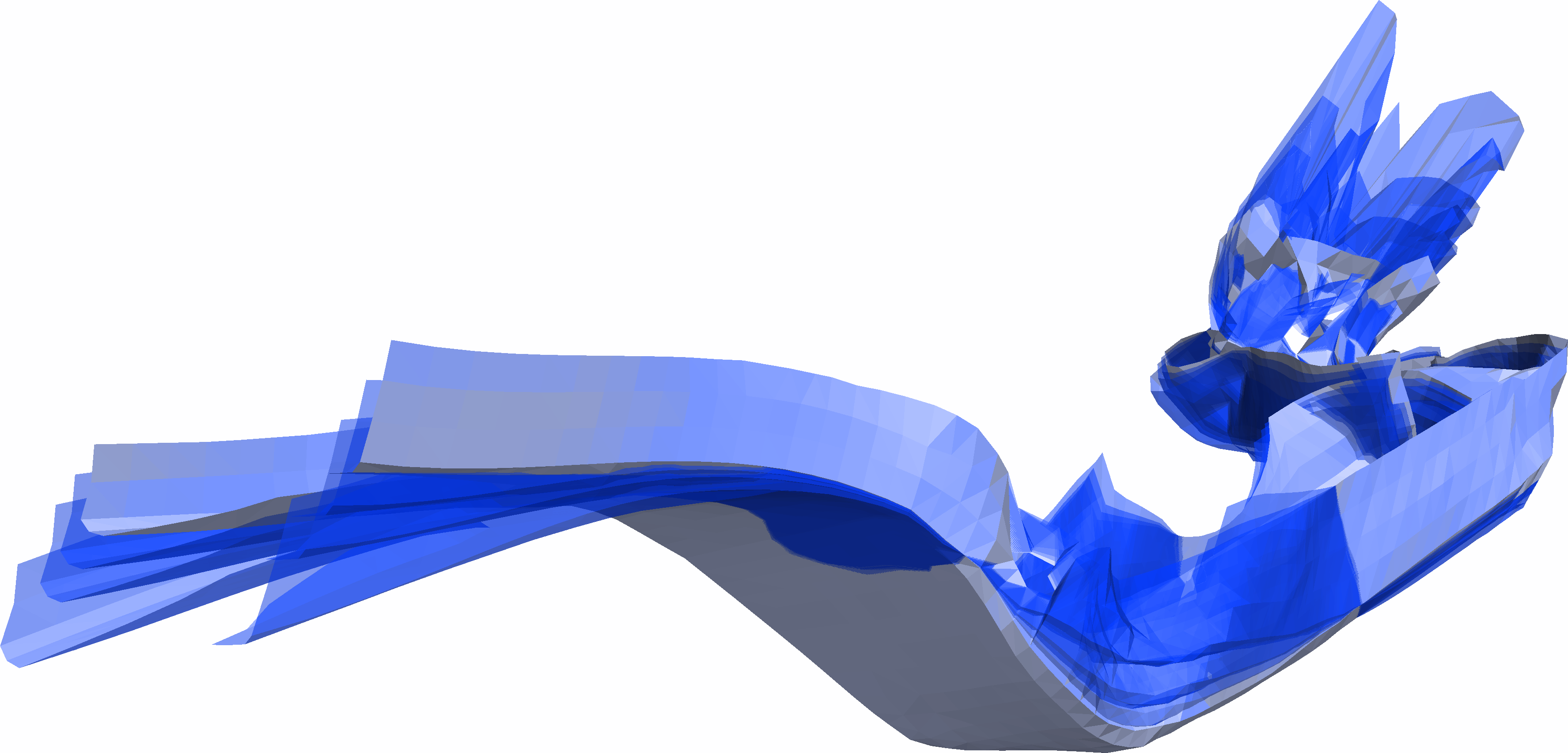}}
    \hfill
    \subcaptionbox{local geometric deformation\label{fig:truck_nica_deformation_4}}{\includegraphics[width=0.475\columnwidth]{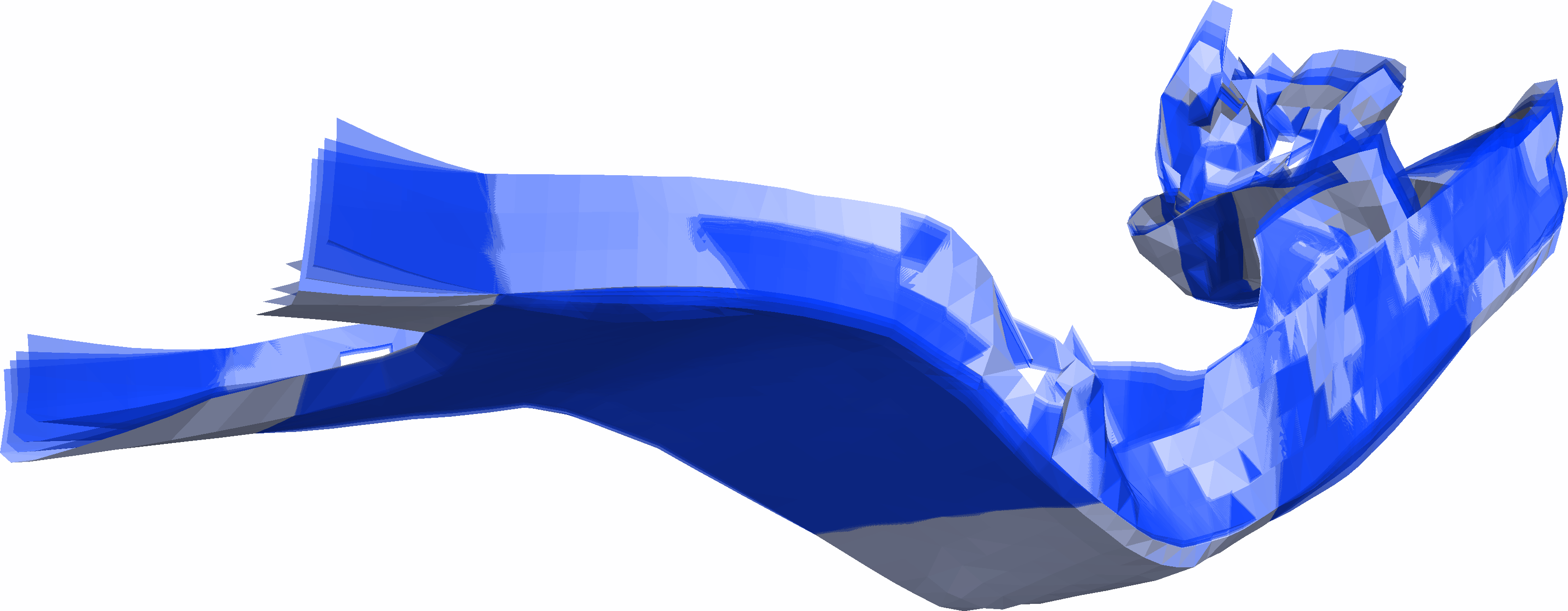}}
    \caption{Overlay of several reconstructions obtained when varying the coefficients with respect to the first four components 
        A translation corresponds to the first eigenvector, a rotation to the second, while the third and fourth eigenvectors reflect geometric deformations.}
\label{fig:truck_nica}
\end{figure}

\section{Summary and Discussion}
The investigation of bundles of numerical simulations, for example for industrial engineering applications, is a challenge due to the high dimensionality and complexity of the data. 
We introduced an analysis approach for this problem, which is based on a new ansatz for low dimensional data representation. 
The method is based on the assumption that all simulations are related by transformations. 
In particular, we propose an abstract setting where all simulations are in the quotient space of all embeddings of a manifold in $\R^3$ modulo a transformation group. 
The simulations are represented in a new basis, derived from an operator invariant to a specific transformation. 
Note that the underlying assumption of distance preserving transformations does not hold for a car crash where the studied parts break or the surfaces are elongated. Empirically, slight numerical disturbances of distances are unproblematic, but this needs to be studied in more detail.

We gave theoretical justifications that, under certain conditions, only a few components in this new basis are required to recover the essential behavior of the simulation data. This phenomenon is also observed for data stemming from engineering applications. 
Although we gave first ideas in this direction, several open problems remain.
In particular, a quantification of the decay of the spectral coefficients is needed.
Moreover, we lack an explanation of why using a discrete approximation of the Laplace-Beltrami operator or the Fokker-Planck operator allows such a decomposition into independent components, as observed in the numerical experiments.
Further research is warranted in order to analyze and to extend the assumptions of the theoretical setting or to analyze the use of operators invariant to other transformation. % Industrial examples are presented that show the properties of this method. 
From another point of view, we have proposed an abstract setting that incorporates the general framework of shape analysis for this type of data. 
Here, a link to the theoretical treatment of known results for the quotient space $Emb(\Ms, \R^3)/\mathit{Diff}(\Ms)$ would be worthwhile. 
In particular further research has to be done, if one would like to efficiently evaluate geodesics in the simulation space, similar to the shape space setting.

The presented analysis of data from bundles of numerical simulations from car crash by the new method shows promising possibilities.
The use of the approach in other industrial contexts is to be explored. 
For example, we investigated the alignment of experimental measurement data in the form of point clouds from highly resolved 3D videos of a real car crash experiment in a crash facility with numerical simulation data. Here the aim is to find the corresponding numerical simulation to the measured point cloud data. For example, one aims to validate the numerical simulation approach or the involved material parameters. Such an alignment can be achieved with the eigenbasis obtained from a Fokker-Planck operator~\cite{Garcke.IzaTeran:2017}.
As an additional application consider the determination of optimal designs in a global optimization approach. Here, considerable efforts are invested in industrial product development, which is highly computationally intensive due to the large number of variables. We conjecture that the use of the new data representation has the potential to drastically reduce this complexity. 
In another context, the reduced basis method (RBM)~\cite{QuarteroniRBM} can be understood as a spectral method, where a problem dependent approximation basis is employed, which outlines a close relation to our approach.
Initial investigations for using the basis computed by our approach in an RBM-context do look promising~\cite{DIzaTeran}.

\section*{Acknowledgments}
We cordially thank the anonymous reviewers for their very helpful comments and suggestions.
\bibliographystyle{alphaabbr}
\bibliography{references}

\end{document}